\theoremstyle{plain}
\newtheorem{theorem}{Theorem}[section]
\newtheorem{proposition}[theorem]{Proposition}
\theoremstyle{definition}
\newtheorem{definition}{Definition}
\theoremstyle{remark}
\newtheorem{remark}{Remark}
\numberwithin{equation}{section}
\begin{document}

%%%
%%%
%%%%%%%%%%%%%%%%%%%%%%%%%%%%%%%%%%%%%%%%%%%%%%%%%%%%%%%%%%%%%%%%%%%%%%%%%%
%%
%%%
\title[Folding $3$-noncrossing RNA pseudoknot structures]
      {Folding $3$-noncrossing RNA pseudoknot structures}
\author{Fenix W.D. Huang, Wade W.J. Peng and Christian M. Reidys$^{\,\star}$}
\address{Center for Combinatorics, LPMC-TJKLC %XXX%
           \\
         Nankai University  \\
         Tianjin 300071\\
         P.R.~China\\
         Phone: *86-22-2350-6800\\
         Fax:   *86-22-2350-9272}
\email{reidys@nankai.edu.cn}%XXXX
\thanks{}
\keywords{RNA pseudoknot structure, $k$-noncrossing, tree, motif, 
dynamic programming, }
\date{September, 2008}
\begin{abstract}
In this paper we present a selfcontained analysis and description of 
the novel {\it ab initio} folding algorithm {\sf cross}, which generates the 
minimum free energy (mfe), $3$-noncrossing, $\sigma$-canonical RNA structure. 
Here an RNA structure is $3$-noncrossing if it does not contain more than 
three mutually crossing arcs and $\sigma$-canonical, if each of its stacks 
has size greater or equal than $\sigma$. Our notion of mfe-structure is 
based on a specific concept of pseudoknots and respective loop-based 
energy parameters.
The algorithm decomposes into three parts: the first is the
inductive construction of motifs and shadows, the 
second is the generation of the skeleta-trees rooted in irreducible 
shadows and the third is the saturation of skeleta via context 
dependent dynamic programming routines.
\end{abstract}
\maketitle
{{\small
%\tableofcontents
}}

%%%
%%%%%%%%%%%%%%%%%%%%%%%%%%%%%%%%%%%%%%%%%%%%%%%%%%%%%%%%%%%%%%%%%%%%%%%%%
%%%

\section{Introduction and background}\label{S:Introduction}

%%%
%%%%%%%%%%%%%%%%%%%%%%%%%%%%%%%%%%%%%%%%%%%%%%%%%%%%%%%%%%%%%%%%%%%%%%%%%
%%%

In this paper we introduce the {\it ab initio} folding algorithm cross
which folds RNA (ribonucleic acid) sequences \cite{Tinoco:73} into 
pseudoknot structures. 
We give a selfcontained presentation and analysis of {\it cross}, whose
source code is publicly available at
\begin{equation*}
{\tt www.combinatorics.cn/cbpc/cross.html}
\end{equation*}
Supplementary material, such as detailed description of the loop-energies and
all implementation details can be found at the above web-site.
Let us begin by providing some background on RNA sequences and structures.
An RNA molecule is firstly described by its primary sequence, a linear string
composed by the four nucleotides {\bf A}, {\bf G}, {\bf U} and {\bf C}
together with the Watson-Crick ({\bf A-U}, {\bf G-C}) and ({\bf U-G}) base
pairing rules. Secondly, RNA, structurally less constrained than its chemical
relative DNA, folds into helical structures by pairing the nucleotides and
thereby lowering their minimum free energy, see Fig.\ref{F:tRNA}
%%%
%%%%%%%%%%%%%%%%%%%%%%%%%%%%%%%%%%%%%%%%%%%%%%%%%%%%%%%%%%%%%%%%%%%%%%%%%
%%%
\begin{figure}[ht]
\centerline{\epsfig{file=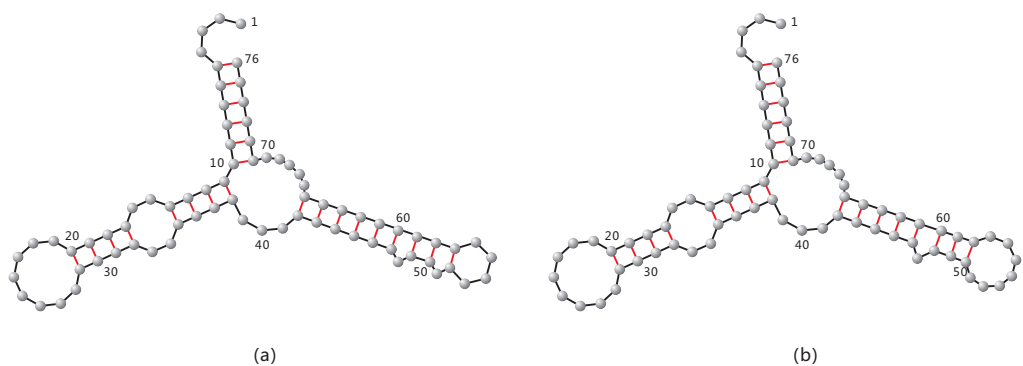,width=0.9\textwidth} \hskip8pt}
\caption{\small The phenylalanine tRNA (re)visited: (a) represents the 
structure
of phenylalanine tRNA, as folded by ViennaRNA
\cite{Vienna:Server,ViennaRNA}. 
(b) shows the phenylalanine structure as folded by {\sf cross} with minimum 
stack size $3$. Note that {\sf cross} does not contain any stack which
 size $\le 3$, therefore (b) is different from (a) slightly 
in $48$ to $60$.
}\label{F:tRNA}
\end{figure}
%%%
%%%%%%%%%%%%%%%%%%%%%%%%%%%%%%%%%%%%%%%%%%%%%%%%%%%%%%%%%%%%%%%%%%%%%%%%%
%%%
Accordingly, RNA exhibits a variety of 3-dimensional structural
configurations, the so called tertiary structures, determining the 
functionality of the molecule.
Besides the noncrossing base pairings found in RNA secondary structures
there exist further types of nucleotide interactions \cite{Westhof:92a}. 
These bonds are called pseudoknots and occur in functional RNA like for 
instance RNAseP \cite{Loria:96a} as well as ribosomal RNA \cite{Konings:95a}.
Indeed, RNA exhibits a diversity of biochemical capabilities
\cite{Science:05a}, proved by the discovery of catalytic RNAs, or 
ribozymes \cite{Loria:96a}, in 1981. 
Like proteins, RNA is capable of catalyzing reactions whereas transfer RNA
acts as a messenger between DNA and protein.
%%%
%%%%%%%%%%%%%%%%%%%%%%%%%%%%%%%%%%%%%%%%%%%%%%%%%%%%%%%%%%%%%%%%%%%%%%%%%
%%%
\begin{figure}[ht]
\centerline{\epsfig{file=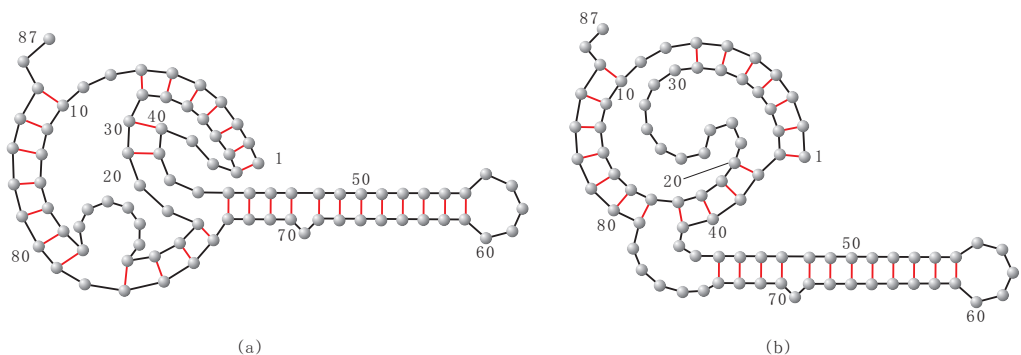,width=0.9\textwidth} \hskip8pt}
\caption{\small The HDV-pseudoknot structure: (a) displays the structure
as folded by Rivas and Eddy's algorithm \cite{RE:98}. (b) shows the structure
as folded by {\sf cross} with minimum stack size $3$. }\label{F:HDV}
\end{figure}
%%%
%%%%%%%%%%%%%%%%%%%%%%%%%%%%%%%%%%%%%%%%%%%%%%%%%%%%%%%%%%%%%%%%%%%%%%%%%
%%%

In light of these RNA functionalities the question of RNA structure
prediction appears to be of relevance. The first mfe-folding algorithms 
for RNA secondary structure are due to 
\cite{Jaces:1960,Tinoco:1971,DeLisi:1971} and the first DP folding routines 
for secondary structures were given by Waterman {\it et al.}
\cite{Waterman:78a,Waterman:86,Zuker:1981,Nussinov:1980}, 
predicting the loop-based
mfe-secondary structure \cite{Tinoco:73} in $O(n^3)$-time and $O(n^2)$-space.
The general problem of RNA
structure prediction under the widely used thermodynamic model is
known to be NP-complete when the structures considered include
arbitrary pseudoknots \cite{Lyngso}. 
There exist however, polynomial time 
folding algorithms, capable of the energy based prediction of certain 
pseudoknots: Rivas et.al. \cite{RE:98}, Uemura et.al. \cite{Uemura:99a}, 
Akutsu \cite{Akutsu:00a} and Lyngs\o \cite{Lyngso}. In the following we shall
use the term pseudoknot synonymous with cross-serial dependencies between 
pairs of nucleotides \cite{Searls:02,Cao:06}. As for the
{\it ab initio} folding of pseudoknot RNA, we find the following two
paradigms: Rivas and Eddy's \cite{RE:98} gap-matrix variant of Waterman's
DP-folding routine for secondary structures
\cite{Waterman:78a,Waterman:79a,Waterman:80,Waterman:86,Nussinov:1980}, 
maximum weighted matching algorithms \cite{MWM:65,MWM:76} and the latter 
taylored for pseudoknot prediction \cite{MWM:95,MWM:98}.
The former method folds into a somewhat ``mysterious'' class of
pseudoknots \cite{RE:00} in polynomial time. Algorithms along these
lines have been developed by Dirks and Pierce {\cite{Dirks:04}}, 
Reeder and Giegerich {\cite{Reeder:04}} and Ren {\it et
al.}~{\cite{Ren:05}}. Additional ideas for pseudoknot folding involve the 
iterated loop matching approach \cite{ILM} and the sampling of RNA structures 
via the Markov-chain Monte-Carlo method \cite{Nebel:06}.

Let us now have a closer look at the DP-paradim by means of analyzing the
algorithm of Rivas and Eddy \cite{RE:98,RE:00,Eddy:04}.
In the course of our analysis we shall make two key observations:
first, DP algorithms inevitably produce arbitrarily high crossing numbers, 
see Tab.\ref{T:taba} and second that not all $3$-noncrossing RNA structures 
can be generated by dynamic programming algorithms--at least not with the 
implemented truncations.
%%%%%%%%%%%%%%%%%%%%%%%%%%%%%%%
\begin{table}
\begin{center}
\begin{tabular}{|c|c|c|c|c|}
\hline
       $k$ & $2$ & $3$  & $4$ &$5$ \\ 
\hline growth rate &2.6180& 4.7913 & 6.8541 & 8.8875 \\ 
\hline
$k$ & $6$ &$7$ & $8$ & $9$    \\
\hline growth rate & 10.9083 & 12.9226 & 14.9330 & 16.9410 \\
\hline 
\end{tabular}
\end{center}
\centerline{}  \caption{\small The exponential growth rates of
$k$-noncrossing RNA structures (minimum arc-length greater or equal than
two).}
\label{T:taba}
\end{table}
%%%%%%%%%%%%%%%%%%%%%%%%%%%%%%%%%%%%%%
The generation of high crossing numbers is insofar problematic as it
implies a very large output class. Already for $k=4$, i.e.~for RNA structures
exhibiting three mutually crossing arcs, we have an exponential growth rate
of $6.8541$--a growth rate exceeding that of the number of natural sequences.
In other words, only for an exponentially small fraction of these structures
we will find a sequence folding into it. Remarkably, this growth rate appears 
to grow linearly in $k$, see Tab.\ref{T:taba}. Any type of study, along the 
lines of \cite{Schuster:94,Stadler:rug,Fontana:99,Reidys:2002,Stadler:98,Schuster:96,Reidys:96}, which is based on such an algorithm, is purely 
computational and does not allow to deduce generic properties in the
sense of \cite{algo-independent}.

Let us define now the non gap-matrices ($vx$, $wx$) and the
gap-matrices ($whx$, $vhx$, $zhx$ and $yhx$). 
\cite{RE:98,Reidys:frame}
The non gap-matrices, $vx$ and $wx$  are two triangular
$n\times n$ matrices, where $vx(i,j)$ is the score of the
best folding between position $i$ and $j$, provided that $i,j$ are paired
to each other and whereas $wx(i,j)$ is the score of the best folding
between the position $i$ and $j$, regardless of whether $i,j$ are
paired or not. See Tab.\ref{T:matrix}. 
%%
%%%%%%%%%%%%%%%%%%%%%%%%%%%%%%%%%%%%%%%%%%%%%%%%%%%%%%%%%%%%%%%%%%%%%%%%
\begin{figure}[ht]
\centerline{\epsfig{file=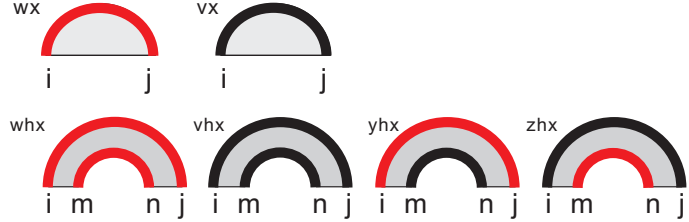,width=0.6\textwidth} \hskip8pt}
\caption{\small Non gap- and gap-matrices. The non gap-matrices $w x$, $vx$ and
gap-matrices $whx$, $vhx$, $yhx$ and $zhx$.}\label{F:matrix}
\end{figure}
%%%
%%%%%%%%%%%%%%%%%%%%%%%%%%%%%%%%%%%%%%%%%%%%%%%%%%%%%%%%%%%%%%%%%%%%%%%%
%%%
The gap-matrices are pairs of matrices, $\alpha hx(i,j;r,s)$,
where $\alpha=w,v,z,y$, are the scores of the best folding depending on the
relation between the positions $i,j$ {\it and} the relation between
positions $r,s$, respectively, see Fig.{\ref{F:matrix}}.
%%%%%%%%%%%%%%%%%%%%%%%%%%%%%%%%%%%%%%
\begin{table}
\begin{center}
\begin{tabular}{|c|c|c|c|c|c|}
\hline
Matrices & $(i,j)$ & $(r,s)$  & Matrices & $(i,j)$ & $(r,s)$ \\
\hline
$whx(i,j;r,s)$& unknown & unknown & $vhx(i,j;r,s)$ & paired & paired\\
\hline
$yhx(i,j;r,s)$ & unknown& paired & $zhx(i,j;r,s)$ & paired & unknown  \\
\hline
\end{tabular} 
\end{center} 
\centerline{}  \caption{\small Table shows the gap-matrix $whx$, 
$vhx$, $yhx$ and $zhx$.}
\label{T:matrix}
\end{table}
%%
%%%%%%%%%%%%%%%%%%%%%%%%%%%%%%%%%%%%%%%%%%%%%%%%%%%%%%%%%%%%%%%%%%%%%%%%
%%%
%%%%%%%%%%%%%%%%%%%%%%%%%%%%%%%%%%%%%%%%%%%%%%%%%%%%%%%%%%%%%%%%%%%%%%%%
%%%
\begin{figure}[ht]
\centerline{\epsfig{file=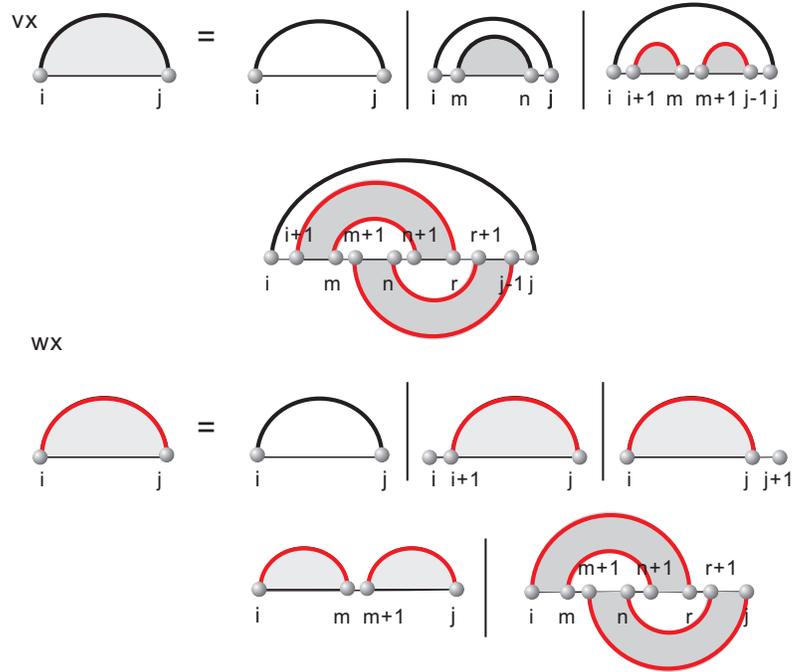,width=0.7\textwidth} \hskip8pt}
\caption{\small The basic recursions: recursion for $vx$ and $wx$ truncated at
$O(whx+whx+whx)$ in Rivas and Eddy's algorithm.}\label{F:recur}
\end{figure}
%%%
%%%%%%%%%%%%%%%%%%%%%%%%%%%%%%%%%%%%%%%%%%%%%%%%%%%%%%%%%%%%%%%%%%%%%%%%
%%%
The key idea in Rivas and Eddy's algorithm is to use gap-matrices as a
generalization of the non gap-matrices $wx$ and $vx$. In particular, both
concepts merge for $r=s-1$, where we have for any
$i\leq r\leq j$
\begin{eqnarray}
whx(i,j;r,r+1)&=&wx(i,j)\\
zhx(i,j;r,r+1)&=&vx(i,j).
\end{eqnarray}
In Fig.\ref{F:recur} we illustrate the recursion for $wx$ and $vx$ in
the pseudoknot algorithm truncated at $O(whx+whx+whx)$.
We can draw the following two conclusions:\\
$\bullet$ {\it by design}--the inductive formation of gap-matrices
          generates arbitrarily high numbers of mutually crossing
          arcs, see Fig.{\ref{F:control}.\\
$\bullet$ nonplanar, $3$-noncrossing pseudoknots cannot be generated by
          inductively forming pairs of gap-matrices, see
          Fig.{\ref{F:nonplanar}.\\
          In order to avoid any confusion: gap-matrices can and will generate
          nonplanar arc configurations, however, they can only facilitate this
          via increasing the crossing number, Fig.\ref{F:control}.
          Fig.{\ref{F:nonplanar} makes evident that the situation is more
          complex:
          nonplanarity is not tied to crossings--there are planar as well
          as nonplanar $3$-noncrossing structures.

%%
%%%%%%%%%%%%%%%%%%%%%%%%%%%%%%%%%%%%%%%%%%%%%%%%%%%%%%%%%%%%%%%%%%%%%%%%
\begin{figure}[ht]
\centerline{\epsfig{file=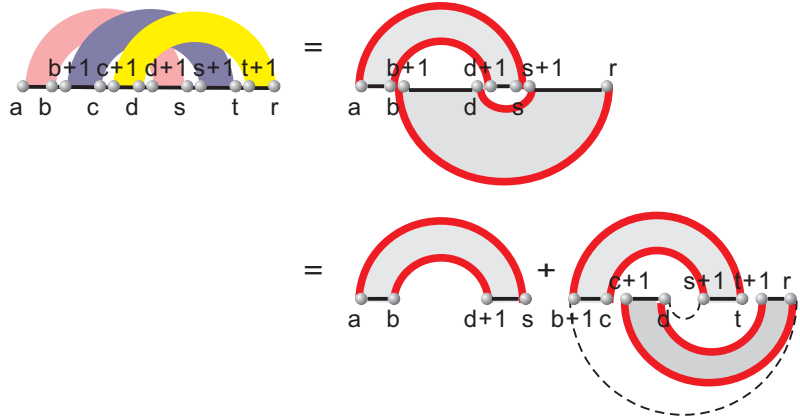,width=0.7\textwidth} \hskip8pt}
\caption{\small No control over crossings:
Here we show how to build a $4$-noncrossing RNA pseudoknot
with gap-matrices. Iterating the formation of gap-matrices will produce
higher and higher crossings.}
\label{F:control}
\end{figure}
%%%
%%%%%%%%%%%%%%%%%%%%%%%%%%%%%%%%%%%%%%%%%%%%%%%%%%%%%%%%%%%%%%%%%%%%%%%%
%%%
%%%
%%%%%%%%%%%%%%%%%%%%%%%%%%%%%%%%%%%%%%%%%%%%%%%%%%%%%%%%%%%%%%%%%%%%%%%%%%%%%%
%%%
%%%
%%%%%%%%%%%%%%%%%%%%%%%%%%%%%%%%%%%%%%%%%%%%%%%%%%%%%%%%%%%%%%%%%%%%%%%%
\begin{figure}[ht]
\centerline{\epsfig{file=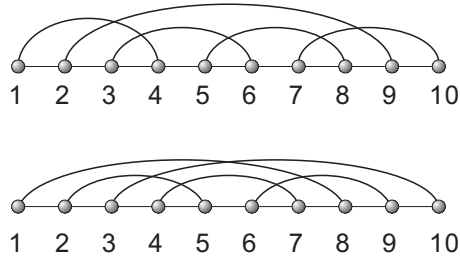,width=0.4\textwidth} \hskip8pt}
\caption{\small Two nonplanar,
$3$-noncrossing RNA structures, which cannot be
generated by pairs of gap-matrices.}\label{F:nonplanar}
\end{figure}
%%%
%%%%%%%%%%%%%%%%%%%%%%%%%%%%%%%%%%%%%%%%%%%%%%%%%%%%%%%%%%%%%%%%%%%%%%%%
%%%

\section{Specifying an output: $k$-noncrossing, canonical RNA structures}
\label{S:out}
%%%
%%%%%%%%%%%%%%%%%%%%%%%%%%%%%%%%%%%%%%%%%%%%%%%%%%%%%%%%%%%%%%%%%%%%%%%%%%%%%%
%%%

The previous section showed that, for RNA pseudoknot structures,
DP-algorithms fold into an uncontrollably large set of structures. 
This phenomenon is in vast contrast to the situation
for RNA secondary structures. The standard DP-routine cannot
produce any crossings, whence they {\it a priori} produce 
secondary structures. We now follow in the footsteps
of Waterman by generalizing his strategy for the case of secondary
structures to pseudoknot structures.
Accordingly, the first step is to specify a combinatorial output class.
To this end we shall provide some basic facts on a particular
representation of RNA structures.

A $k$-noncrossing diagram is a labeled graph over the vertex
set $[n]$ with vertex degrees $\leq 1$, represented by drawing
its vertices $1,\ldots, n$ in a horizontal line and its arcs
$(i,j)$, where $i<j$, in the upper half-plane, containing at
most $k-1$ mutually crossing arcs. The vertices and arcs
correspond to nucleotides and Watson-Crick (\textbf{A-U},
\textbf{G-C}) and (\textbf{U-G}) base pairs, respectively.
Diagrams have the following three key parameters: the maximum
number of mutually crossing arcs, $k-1$, the minimum arc-length,
$\lambda$ and minimum stack-length, $\sigma$
($(k,\lambda, \sigma)$-diagrams). The length of an arc $(i,j)$
is given by $j-i$ and a stack of length $\sigma$ is the sequence of
``parallel`` arcs of the form
\begin{equation}\label{E:stack}
((i,j),(i+1,j-1),\ldots, (i+(\sigma-1),j-(\sigma-1))),
\end{equation}
see Fig.\ref{F:ma_reid2.eps}.
%%%
%%%%%%%%%%%%%%%%%%%%%%%%%%%%%%%%%%%%%%%%%%%%%%%%%%%%%%%%%%%%%%%%%%%%
%%%
\begin{figure}[ht]
\centerline{%
\epsfig{file=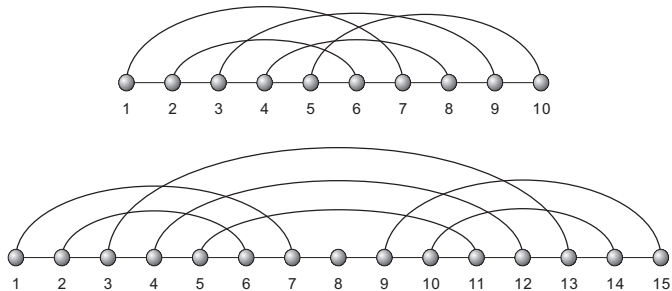,width=0.6\textwidth}\hskip15pt
 }
\caption{\small $k$-noncrossing diagrams: we display a
$4$-noncrossing, arc-length $\lambda\ge 4$ and $\sigma\ge 1$
(upper) and $3$-noncrossing, $\lambda\ge 4$ and $\sigma\ge 2$
(lower) diagram.} \label{F:ma_reid2.eps}
\end{figure}
%%%
%%%%%%%%%%%%%%%%%%%%%%%%%%%%%%%%%%%%%%%%%%%%%%%%%%%%%%%%%%%%%%%%%%%%
%%%
We call an arc of length $\lambda$ a $\lambda$-arc.

We are now in position to specify the output-set.
We shall consider RNA pseudoknot
 structures that are $3$-noncrossing, $\sigma \ge 3$-canonical
 and have a minimum arc-length $\lambda \ge 4$. The
 $3$-noncrossing property is mostly for algorithmic convenience
 and the generalization to higher crossing numbers represents
 not a major obstacle. We consider $3$-canonical structures,
 i.e.~those in which each stack has length at least three,
 since we are interested in minimum free energy structures.
 Finally, the minimum arc-length of four is a result of
 biophysical constraints. Accordingly, we shall identify
 pseudoknot RNA structures with $\langle k,4,\sigma\rangle$-diagrams and
refer to them simply as $\langle k,\sigma\rangle$-structures,
implicitly assuming the minimum arc-length $\lambda \ge 4$. In
Fig.\ref{F:phdv.eps}
%%%
%%%%%%%%%%%%%%%%%%%%%%%%%%%%%%%%%%%%%%%%%%%%%%%%%%%%%%%%%%%%%%%%%%%%
%%%
\begin{figure}[ht]
\centerline{%
\epsfig{file=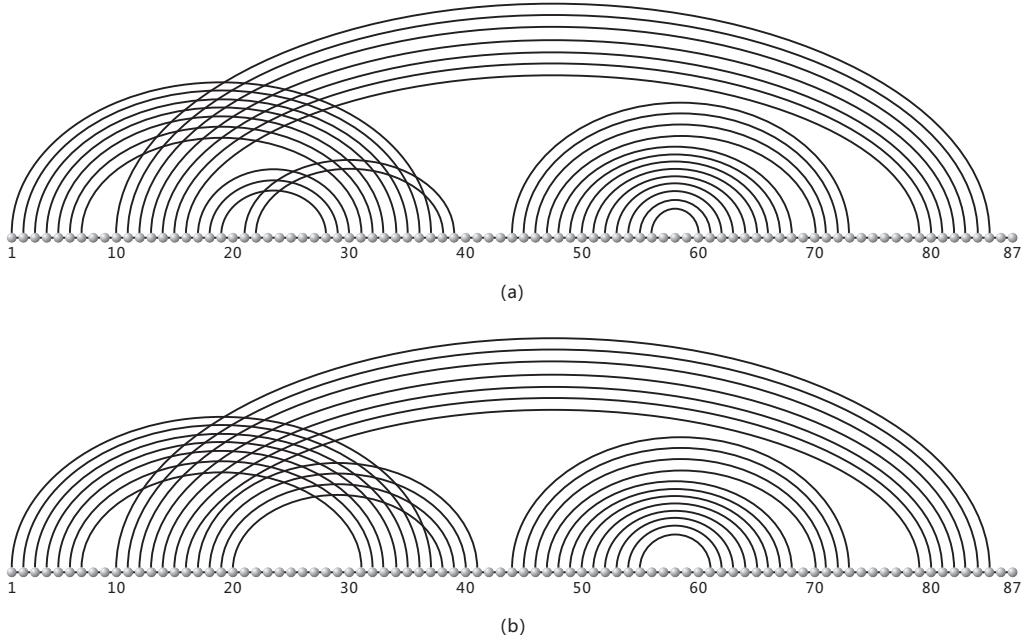,width=0.9\textwidth}\hskip15pt
 }
\caption{\small The HDV-virus pseudoknot structures as folded by
{\sf cross} (b). This structure differs from the natural structure 
displyed in (a) \cite{W:HDV} by exactly seven base pairs.} 
\label{F:phdv.eps}
\end{figure}
%%%
%%%%%%%%%%%%%%%%%%%%%%%%%%%%%%%%%%%%%%%%%%%%%%%%%%%%%%%%%%%%%%%%%%%%
%%%
we present a particular $3$-noncrossing, $3$-canonical RNA structure:
the HDV-virus as folded by {\sf cross}.

We next present some of the combinatorics of $\langle 3,\sigma
\rangle$-structures.
Let $\mathsf{T}_{k,\sigma}^{[4]}$ denote the number of
$k$-noncrossing, $\sigma$-canonical RNA structures over $[n]$.
 The generating function,
$$
\mathbf{T}_{k,\sigma}^{[4]}(z)=\sum_{n\ge 0}
 \mathsf{T}_{k,\sigma}^{[4]}(n)z^n \quad k,\sigma \ge 3
$$
of $k$-noncrossing, $\sigma$-canonical RNA structures has been
obtained in \cite{Reidys:08ma}. This function is closely related to
$\mathbf{F}_k(z)=\sum_n f_k(2n,0)z^{2n}$, the ordinary
 generating function of $k$-noncrossing matchings. Beyond
 functional equations implied directly by the
 reflection-principle \cite{Gessel:}, the following asymptotic formula
 has been derived \cite{Reidys:08wang}
\begin{equation}
\forall k\in \mathbb{N}, \quad f_k(2n,0)\sim c_k n^{
-((k-1)^2+(k-1)/2)}(2(k-1))^{2n}, \quad c_k>0.
\end{equation}\label{asy}
Setting
$$
w_0(x)=\frac{x^{2\sigma-2}}{1-x^2+x^{2\sigma}} \quad
\text{\rm and} \quad v_0(x)=1-x+w_0(x)x^2+w_0(x)x^3+w_0(x)x^4
$$
we can now state
%%%
%%%%%%%%%%%%%%%%%%%%%%%%%%%%%%%%%%%%%%%%%%%%%%%%%%%%%%%%%%%%%%%%%%%%%%%%%%
%%%
\begin{theorem}
Let $k,\sigma\in \mathbb{N}$, where $k,\sigma \ge 3$, $x$ is an
 indeterminate and $\rho_k$ the dominant, positive real
singularity of $\mathbf{F}_k(z)$. Then $\mathbf{T}_{k,\sigma}^{[4]}(x)$,
the generating function of $\langle k,\sigma\rangle$-structures, is given by
\begin{equation}
 \mathbf{T}_{k,\sigma}^{[4]}(x)=\frac{1}{v_0(x)}\mathbf{F}_k
\left( \frac{\sqrt{w_0(x)}x}{v_0(x)} \right).
\end{equation}
Furthermore, the asymptotic formula
\begin{equation}
 \mathbf{T}_{k,\sigma}^{[4]}(n) \sim c_k n^{-(k-1)^2-(k-1)/2}
\left( \frac{1}{\gamma_{k,\sigma}^{[4]}} \right)^n,\ \quad \
\text{\rm for} \quad k=3,4,\ldots,9.
\end{equation}
holds, where $\gamma_{k,\sigma}^{[4]}$  is the minimal positive
real solution of the equation
$\frac{\sqrt{w_0(x)}x}{v_0(x)}=\rho_k$.
\end{theorem}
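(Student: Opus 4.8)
The plan is to derive the generating function identity via a symbolic substitution into $\mathbf{F}_k(z)$, exploiting the fact that a $\langle k,\sigma\rangle$-structure is obtained from a $k$-noncrossing matching (equivalently, a core structure) by inflating its arcs into stacks of size $\ge\sigma$ and inserting isolated (unpaired) vertices. First I would recall from \cite{Reidys:08ma} the standard "inflation" dictionary: the core of a $k$-noncrossing, $\sigma$-canonical structure is a $k$-noncrossing structure in which no two arcs are parallel, and conversely every $\langle k,\sigma\rangle$-structure arises uniquely from a core by (i) replacing each core-arc by a stack of length $\ge\sigma$ and (ii) distributing unpaired vertices into the intervals determined by the core. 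Tracking the length-generating variable $x$ through this bijection, a single core-arc of weight $x^2$ becomes a stack whose generating function is $\sum_{m\ge\sigma} x^{2m} = \frac{x^{2\sigma}}{1-x^2}$; combined with the slot for inserting unpaired vertices along each backbone segment, this is exactly where $w_0(x)=\frac{x^{2\sigma-2}}{1-x^2+x^{2\sigma}}$ and the "vertex-insertion" factor $v_0(x)=1-x+w_0(x)x^2+w_0(x)x^3+w_0(x)x^4$ come from — the terms $w_0(x)x^2$, $w_0(x)x^3$, $w_0(x)x^4$ encoding the admissible ways to attach short substructures subject to the minimum arc-length $\lambda\ge 4$, and the leading $1-x$ handling the isolated-vertex/empty contributions. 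The upshot of this bookkeeping is the functional substitution
\begin{equation}
\mathbf{T}_{k,\sigma}^{[4]}(x)=\frac{1}{v_0(x)}\,\mathbf{F}_k\!\left(\frac{\sqrt{w_0(x)}\,x}{v_0(x)}\right),
\end{equation}
since $\mathbf{F}_k$ counts matchings weighted by $z^{2n}$ and each core-arc contributes the effective weight $w_0(x)x^2/v_0(x)^2$ (so the argument of $\mathbf{F}_k$ is the square root of this), while the global prefactor $1/v_0(x)$ accounts for the outermost vertex-insertion slot.

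For the asymptotics, I would invoke singularity analysis. By the result quoted in the excerpt, $f_k(2n,0)\sim c_k n^{-((k-1)^2+(k-1)/2)}(2(k-1))^{2n}$, so for $k=3,\dots,9$ the function $\mathbf{F}_k(z)$ has an algebraic branch-point singularity at $z=\rho_k=\frac{1}{2(k-1)}$ of the subexponential type $(1-z/\rho_k)^{(k-1)^2+(k-1)/2-1}$ (the exact sub-/super-linear terms and logarithmic corrections for the finitely many $k$ in range having been pinned down in \cite{Reidys:08wang}). The composed function $\mathbf{T}_{k,\sigma}^{[4]}(x)=\frac{1}{v_0(x)}\mathbf{F}_k(\vartheta(x))$ with $\vartheta(x)=\frac{\sqrt{w_0(x)}\,x}{v_0(x)}$ is then analytic up to the dominant singularity $x=\gamma_{k,\sigma}^{[4]}$, defined as the minimal positive real solution of $\vartheta(x)=\rho_k$. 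I would verify that $\vartheta$ is analytic and strictly increasing near $\gamma_{k,\sigma}^{[4]}$ with $\vartheta'(\gamma_{k,\sigma}^{[4]})\neq 0$, so that $1-\vartheta(x)/\rho_k$ vanishes to first order; hence the singular expansion of $\mathbf{F}_k$ is transported to one of the same exponent in $(1-x/\gamma_{k,\sigma}^{[4]})$, the prefactor $1/v_0$ and the Jacobian $\vartheta'$ merely modifying the constant. A transfer theorem (e.g.\ \cite{Reidys:08wang}, in the style of Flajolet--Odlyzko) then yields
\begin{equation}
\mathsf{T}_{k,\sigma}^{[4]}(n)\sim c_k\, n^{-(k-1)^2-(k-1)/2}\left(\frac{1}{\gamma_{k,\sigma}^{[4]}}\right)^{n},
\end{equation}
with the same polynomial exponent as for matchings (the $-1$ in the exponent of the singular term becomes $n^{-\mathrm{exp}}$ after coefficient extraction, exactly matching the matching case).

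The main obstacle is not the substitution — that is essentially bookkeeping once the core/inflation bijection of \cite{Reidys:08ma} is in hand — but two analytic points in the asymptotic half. First, one must check that $\gamma_{k,\sigma}^{[4]}$ is genuinely the \emph{dominant} singularity of $\mathbf{T}_{k,\sigma}^{[4]}$: the candidate competitors are the singularities of $v_0(x)$ (zeros of $1-x^2+x^{2\sigma}$, i.e.\ the poles carried by $w_0$) and any earlier solution of $\vartheta(x)=\rho_k$ on the positive axis, and one has to confirm, for each $k=3,\dots,9$, that $\gamma_{k,\sigma}^{[4]}$ lies strictly inside the disc of convergence of these and that $\vartheta$ maps $[0,\gamma_{k,\sigma}^{[4]})$ into $[0,\rho_k)$ bijectively — a monotonicity/numeric estimate that is clean in principle but must be done carefully because it is the step that restricts the statement to $k\le 9$ (the singular behaviour of $\mathbf{F}_k$ being unconditionally controlled only in that range). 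Second, one needs the square-root $\sqrt{w_0(x)}$ to be analytic at $\gamma_{k,\sigma}^{[4]}$, which holds since $w_0(x)>0$ there; this is routine but worth stating. Everything else reduces to invoking the transfer theorem and the asymptotic formula for $f_k(2n,0)$ already recorded in the excerpt.
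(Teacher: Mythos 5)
The first thing to note is that the paper does not prove this theorem at all: the generating function identity is imported from \cite{Reidys:08ma} and the asymptotic formula from \cite{Reidys:08wang}, so there is no in-paper argument to measure yours against. Your outline does follow the method of those references (core/inflation bijection plus singularity analysis), and your treatment of the analytic half is sound in its priorities: you correctly flag that $\gamma_{k,\sigma}^{[4]}$ must be shown to dominate the zeros of $v_0$ and of $1-x^2+x^{2\sigma}$, that the composition must be subcritical with $\vartheta'(\gamma_{k,\sigma}^{[4]})\neq 0$, and that the restriction to $k\le 9$ reflects the range in which the singular expansion of $\mathbf{F}_k$ has actually been established (coefficient asymptotics alone do not determine a singular expansion, so this must be taken from \cite{Reidys:08wang} rather than read off the displayed formula for $f_k(2n,0)$).

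The combinatorial half, however, has a concrete gap. You inflate each core-arc into a stack of length $\ge\sigma$ and assign it the weight $\sum_{m\ge\sigma}x^{2m}=\frac{x^{2\sigma}}{1-x^2}$, but the per-arc weight that actually appears in the theorem is $w_0(x)x^2=\frac{x^{2\sigma}}{1-x^2+x^{2\sigma}}=\frac{u}{1+u}$ with $u=\frac{x^{2\sigma}}{1-x^2}$. The extra $x^{2\sigma}$ in the denominator is not absorbed by ``the slot for inserting unpaired vertices along each backbone segment''; it is the alternating-sign correction $u-u^2+u^3-\cdots$ arising from the inversion between structures and their cores (naive inflation overcounts, because distributing unpaired vertices inside an inflated stack splits it into several core arcs and thereby changes the core). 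That inversion is the actual content of the combinatorial proof in \cite{Reidys:08ma}; your sketch asserts the answer while your own bookkeeping, taken literally, produces a different formula. The same applies to $v_0$: the terms $w_0(x)x^2+w_0(x)x^3+w_0(x)x^4$ are the excluded inflations whose top arc would have length $1$, $2$ or $3$ (violating $\lambda\ge 4$), and making the signs come out right again requires the inversion framework rather than a direct ``attach short substructures'' reading. So the proposal is a correct road map to the cited proof, but as written it is not a self-contained derivation of the stated generating function.
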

%%%
%%%%%%%%%%%%%%%%%%%%%%%%%%%%%%%%%%%%%%%%%%%%%%%%%%%%%%%%%%%%%%%%%%%%%%%%%%
%%%
Theorem $1$ implies exact enumeration results as well as an array of
exponential growth rates indexed by $k$ and $\sigma$. The latter are
presented in Tab.\ref{T:1} and are of relevance in the
context of the asymptotic analysis of the algorithm.
%%
%%%%%%%%%%%%%%%%%%%%%%%%%%%%%%%%%%%%%%%%%%%%%%%%%%%%%%%%%%%%%%%%%%%%%%%%
\begin{table}
\begin{center}
\begin{tabular}{|c|c|c|c|c|c|c|c|}
%\hline
%  \multicolumn{9}{|c|}{\textbf{$\lambda=2$}}\\
\hline $k$ & \small$3$ & \small $4$ & \small $5$ & \small $6$
&\small $7$ & \small $8$ & \small $9$  \\
\hline $\sigma=3$ & \small$2.0348$ & \small $2.2644$ & \small
$2.4432$ & \small $2.5932$
&\small $2.7243$ & \small $2.8414$ & \small $2.9480$  \\
$\sigma=4$ & \small$1.7898$ & \small $1.9370$ & \small $2.0488$ &
\small $2.1407$
&\small $2.2198 $ & \small $2.2896$ & \small $2.3523$  \\
$\sigma=5$ & \small$1.6465$ & \small $1.7532$ & \small $1.8330$ &
\small $1.8979$
&\small $1.9532$ & \small $2.0016$ & \small $2.0449$  \\
$\sigma=6$ & \small$1.5515$ & \small $1.6345$ & \small $1.6960$ &
\small $1.7457$
&\small $1.7877$ & \small $1.8243$ & \small $1.8569$  \\
 $\sigma=7$ & \small$1.4834$ & \small $1.5510$ & \small
$1.6008$ & \small $1.6408$
&\small $1.6745$ & \small $1.7038$ & \small $1.7297$  \\
$\sigma=8$ & \small$1.4319$ & \small $1.4888$ & \small $1.5305$ &
\small $1.5639$
&\small $1.5919$ & \small $1.6162$ & \small $1.6376$  \\
 $\sigma=9$ & \small$1.3915$ & \small $1.4405$ & \small
$1.4763$ & \small $1.5049$
&\small $1.5288$ & \small $1.5494$ & \small $1.5677$  \\
\hline
\end{tabular}
\centerline{}   \caption{\small Exponential growth rates of
$\langle k,\sigma\rangle$-structures.
}\label{T:1}
\end{center}
\end{table}
%%
%%%%%%%%%%%%%%%%%%%%%%%%%%%%%%%%%%%%%%%%%%%%%%%%%%%%%%%%%%%%%%%%%%%%%%%%
%%%%%%%%%%%%%%%%%%%%%%%%%%%%%%%
In addition, Tab.\ref{T:1} shows that $3$-noncrossing, $\sigma$-canonical RNA
structures have remarkably moderate growth rates. $\sigma$-canonical structures
with higher crossing numbers exhibit also moderate growth rates, indicating
that generalizations of the current implementation of {\sf cross} from $k=3$
to $k=4$ or $5$ are feasible.

%%%
%%%%%%%%%%%%%%%%%%%%%%%%%%%%%%%%%%%%%%%%%%%%%%%%%%%%%%%%%%%%%%%%%%%%%%%%%%
%%%

\section{Loops, motifs and shadows}

%%%
%%%%%%%%%%%%%%%%%%%%%%%%%%%%%%%%%%%%%%%%%%%%%%%%%%%%%%%%%%%%%%%%%%%%%%%%%%
%%%

Suppose we are given a $\langle 3,\sigma\rangle$-structure, $S$. Let 
$\alpha$ be an $S$-arc and denote the set of $S$-arcs that cross $\beta$
by $\mathscr{A}_{S}(\beta)$. Clearly we have
\begin{equation}
\beta \in \mathscr{A}_S(\alpha) \quad
 \Longleftrightarrow \quad  \alpha\in
 \mathscr{A}_S(\beta).
\end{equation}
An arc $\alpha \in \mathscr{A}_S(\beta)$ is called a
minimal, $\beta$-crossing if there exists no $\alpha' \in
\mathscr{A}_S(\beta)$ such that $\alpha' \prec \alpha$. Note that
$\alpha \in \mathscr{A}_S(\beta)$ can be minimal $\beta$-crossing,
while $\beta$ is {\it not} minimal $\alpha$-crossing.
We call a pair of crossing arcs $(\alpha,\beta)$ balanced, if $\alpha$ is
minimal, $\beta$-crossing and $\beta$ is minimal $\alpha$-crossing,
respectively. $3$-noncrossing diagrams exhibit 
the following four basic loop-types $3$-noncrossing diagrams:

\begin{figure}[ht]
\centerline{%
\epsfig{file=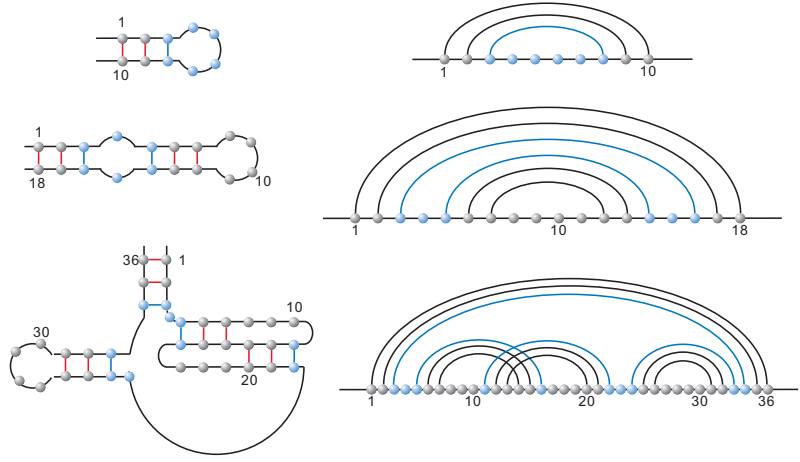, width=0.7\textwidth}\hskip15pt
 }
\caption{\small The standard loop-types: hairpin-loop (top),
interior-loop (middle) and multi-loop (bottom).
These represent all loop-types that occur in RNA secondary
structures.} \label{F:stand}
\end{figure}
%%%
%%%%%%%%%%%%%%%%%%%%%%%%%%%%%%%%%%%%%%%%%%%%%%%%%%%%%%%%%%%%%%%%%%%%%%%%%%
%%%
{\bf (1)} a {\it hairpin}-loop, being a pair
$$
((i,j),[i+1,j-1])
$$
where $(i,j)$ is an arc and $[i,j]$ is an interval, i.e.~a sequence of
consecutive vertices $(i,i+1,\dots,j-1,j)$. \\
{\bf (2)} an {\it interior}-loop, being a sequence
$$
((i_1,j_1),[i_1+1,i_2-1],(i_2,j_2),[j_2+1,j_1-1]),
$$
where $(i_2,j_2)$ is nested in $(i_1,j_1)$.\\
{\bf (3)} a {\it multi}-loop, see Fig.\ref{F:stand}, being a sequence
$$
((i_1,j_1),[i_1+1,\omega_1-1],S_{\omega_{1}}^{\tau_1},
[\tau_1+1,\omega_2-1], S_{\omega_{2}}^{\tau_2},
\dots )
$$
where $S_{\omega_h}^{\tau_h}$ denotes a pseudoknot structure
over $[\omega_h,\tau_h]$ (i.e.~nested in $(i_1,j_1)$) and
subject to the following condition: if all
$S_{\omega_h}^{\tau_h}=(\omega_h,\tau_h)$, i.e.~all substructures
are simply arcs, for all $h$, then $h\ge 2$.\\
%%%
%%%%%%%%%%%%%%%%%%%%%%%%%%%%%%%%%%%%%%%%%%%%%%%%%%%%%%%%%%%%%%%%%%%%%%%%%%
%%%
\begin{figure}[ht]
\centerline{\epsfig{file=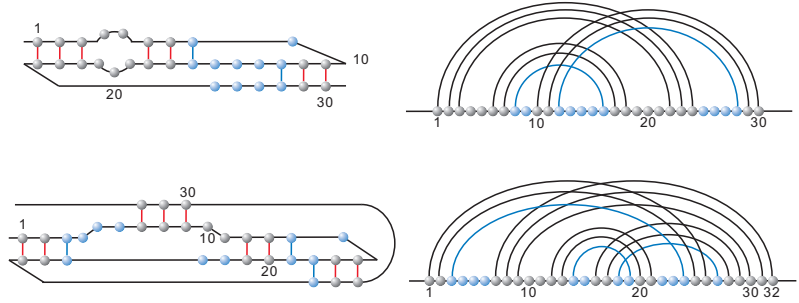,width=0.7\textwidth}\hskip8pt}
\caption{\small Pseudoknots: we display a balanced (top)
and an unbalanced pseudoknot (bottom). The latter contains the stack over
$(3,24)$, which is minimal for the arc $(9,30)$, which is {\it not} 
contained in the pseudoknot.}
\label{F:pseudoloop}
\end{figure}
%%%
%%%%%%%%%%%%%%%%%%%%%%%%%%%%%%%%%%%%%%%%%%%%%%%%%%%%%%%%%%%%%%%%%%%%%%%%%%
%%%
We finally define pseudokont-loops:\\
{\bf (4)} a {\it pseudoknot}, see Fig.\ref{F:pseudoloop},
consists of the following data:\\
({\sf P1}) a set of arcs
$$
P=\left\{(i_1,j_1),(i_2,j_2), \dots,(i_t,j_t)\right\},
$$
where $i_1=\min\{i_s\}$
and $j_t=\max\{j_s\}$, such that \\
{\bf (i)} the diagram induced by the arc-set $P$ is irreducible,
i.e.~the line-graph of $P$ is connected and \\
{\bf (ii)} for each $(i_{s},j_{s})\in P$ there exists some arc $\beta$
(not necessarily contained in $P$) such that $(i_{s},j_{s})$
is minimal $\beta$-crossing.\\
({\sf P2})  all vertices $i_1<r<j_t$, not contained in hairpin-, interior-
or {multi-loops}.\\
We call a pseudoknot balanced if its arc-set can be decomposed into
pairs of balanced arcs.
%%%
%%%%%%%%%%%%%%%%%%%%%%%%%%%%%%%%%%%%%%%%%%%%%%%%%%%%%%%%%%%%%%%%%%%%%%%%%%
%%%

\subsection{Motifs and shadows}

%%%
%%%%%%%%%%%%%%%%%%%%%%%%%%%%%%%%%%%%%%%%%%%%%%%%%%%%%%%%%%%%%%%%%%%%%%%%%
%%%

Let $\prec$ denote the partial order over the set of arcs
(written as $(i,j),\ i<j)$ of a $k$-noncrossing diagram,
given by
\begin{equation}
 (i_1,j_1) \prec (i_2,j_2) \ \Longleftrightarrow \
 i_2<i_1 \ \wedge \ j_1<j_2.
\end{equation}
A $k$-noncrossing core is a $k$-noncrossing diagram without any
two arcs of the form $(i,j),\ (i+1,j-1)$. Any $k$-noncrossing
RNA structure, $S$ has a unique $k$-noncrossing core, $c(S)$ 
\cite{07lego}, obtained in two steps:
first one identifies all arcs contained in stacks, inducing a
contracted diagram and secondly one relabels the vertices.
%%%
%%%%%%%%%%%%%%%%%%%%%%%%%%%%%%%%%%%%%%%%%%%%%%%%%%%%%%%%%%%%%%%%%%%
%%%
\begin{figure}[ht]
\centerline{\epsfig{file=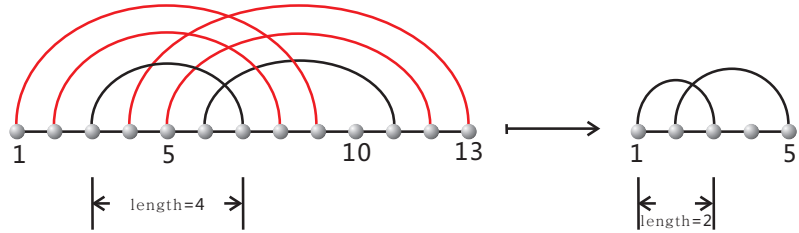,width=0.7\textwidth}\hskip15pt}
\caption{\small Core-structures: A structure, $S$, (lhs) is mapped
into its core $c(S)$ (rhs). Clearly $S$ has arc-length $\ge 4$
and as a consequence of the collapse of the stack
$((4,13),(5,12),(6,11))$ (the red arcs are being removed)
into the arc $(2,5)$. $c(S)$ contains the arc $(1,3)$. This arc
becomes, after relabeling, a $2$-arc.}\label{F:ma_reid5.eps}
\end{figure}
%%%
%%%%%%%%%%%%%%%%%%%%%%%%%%%%%%%%%%%%%%%%%%%%%%%%%%%%%%%%%%%%%%%%%%%
%%%
Note that the core-map does in general not preserve arc-length.
%%%
%%%%%%%%%%%%%%%%%%%%%%%%%%%%%%%%%%%%%%%%%%%%%%%%%%%%%%%%%%%%%%%%%%%
%%%
\begin{definition}
\textbf{(Motif)} A $\langle k,\sigma\rangle$-motif, $\mathfrak{m}$, is a
$\langle k,\sigma\rangle$-structure over $[n]$, having the following
properties: \\
{\sf (M1)} $\mathfrak{m}$ has a nonnesting core. \\
{\sf (M2)} All $\mathfrak{m}$-arcs are contained in stacks of
 length exactly $\sigma\ge 3$ and length $\lambda \ge 4$.\\
The set of all motifs is denoted by $\mathbb{M}^\sigma_k(n)$
and we set $\mu^*_{k,\sigma}(n) = |\mathbb{M}^\sigma_k(n)|$.
\end{definition}
%%%
%%%%%%%%%%%%%%%%%%%%%%%%%%%%%%%%%%%%%%%%%%%%%%%%%%%%%%%%%%%%%%%%%%%
%%%
Property {\sf (M1)} is obviously equivalent to: all arcs of
the core, $c(\mathfrak{m})$, are $\prec$-maximal.

Let $S$ be a $\langle 3,\sigma\rangle$-structure.
We call two $k$-noncrossing diagrams $\delta_1,\delta_2$ adjacent if
and only if $\delta_2$ is derived by selecting a pair of
isolated $\delta_1$-vertices, $i<j$ such that $(i-1,j+1)$
is a $\delta_1$-arc. With respect to this notion of adjacency
the set of $k$-noncrossing diagrams over $[n]$ becomes a
directed graph, which we denote by $\mathscr{G}_k(n)$.
%%%
%%%%%%%%%%%%%%%%%%%%%%%%%%%%%%%%%%%%%%%%%%%%%%%%%%%%%%%%%%%%%%%%%%%%%%%%%
%%%
\begin{definition}{\bf (Shadow)}
A shadow of $S$ is a $\mathscr{G}_k(n)$-vertex connected
to $S$ by a $\mathscr{G}_k(n)$-path.
\end{definition}
%%%
%%%%%%%%%%%%%%%%%%%%%%%%%%%%%%%%%%%%%%%%%%%%%%%%%%%%%%%%%%%%%%%%%%%%%%%%%
%%%
Intuitively speaking, a shadow is derived by extending the stacks of a 
structure from top to bottom.
%%%
%%%%%%%%%%%%%%%%%%%%%%%%%%%%%%%%%%%%%%%%%%%%%%%%%%%%%%%%%%%%%%%%%%%%%%%%%
%%%
\begin{theorem}\label{T:decompose}
Suppose $k,\sigma\ge 2$. \\
{\rm (a)}  Any $k$-noncrossing, $\sigma$-canonical RNA
 structure corresponds to a unique sequence of shadows.\\
{\rm (b)}  Any $\langle3,\sigma\rangle$-structure has a unique
 loop-decomposition.
\end{theorem}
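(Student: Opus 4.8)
The plan is to prove the two parts by exhibiting the relevant canonical objects and then arguing uniqueness. For part (a), I would first recall that, by the core-map discussion preceding the statement, every $k$-noncrossing $\sigma$-canonical structure $S$ contracts to a well-defined $k$-noncrossing core $c(S)$ whose arcs are precisely the ``top'' arcs of the stacks of $S$. The shadow construction is the inverse bottom-extension of stacks, so I would organize $S$ by its \emph{maximal stacks}: each maximal stack $((i,j),(i+1,j-1),\dots)$ is contained in a unique minimal irreducible block once we group stacks that mutually cross. Concretely, I would decompose the arc-set of $S$ into its irreducible components (the connected components of the line graph of the core), read off from each such component the associated shadow by extending its stacks downward to their full length in $S$, and finally list the shadows in the left-to-right order of their starting points $i_1=\min\{i_s\}$. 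This gives a \emph{sequence} of shadows. Uniqueness then follows because the core $c(S)$ is unique (cited fact \cite{07lego}), the partition of the core into irreducible components is unique, and the stack-lengths that determine the downward extension are recorded in $S$ itself; conversely, given the sequence of shadows one reassembles $S$ by concatenating them, so the correspondence is a bijection.

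For part (b), I would argue that every vertex and every arc of a $\langle 3,\sigma\rangle$-structure $S$ lies in exactly one loop among the four types (hairpin, interior, multi, pseudoknot). The key structural input is the notion of \emph{balanced} crossing pairs and minimal $\beta$-crossings introduced just above: condition ({\sf P1})(ii) together with ({\sf P2}) pins down precisely which maximal set of mutually entangled arcs forms a single pseudoknot-loop, and the ``not contained in hairpin-, interior- or multi-loops'' clause guarantees these pseudoknot regions are disjoint from the secondary-structure loops. So the recipe is: first identify all pseudoknot-loops as the irreducible crossing blocks satisfying ({\sf P1})--({\sf P2}); remove their spanning intervals; what remains is a noncrossing (secondary) structure on the complementary intervals, for which the classical nesting forest gives a unique hairpin/interior/multi decomposition (this is exactly Waterman's secondary-structure loop decomposition, which we may quote). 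Gluing the two pieces back produces the claimed loop-decomposition, and uniqueness is inherited from the uniqueness of the irreducible-component partition and of the secondary-structure nesting forest.

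I expect the main obstacle to be the \emph{well-definedness and disjointness} of pseudoknot-loops in part (b): one must check that two distinct irreducible crossing blocks cannot ``interleave'' in a way that makes their spanning intervals $[i_1,j_t]$ overlap without one being nested in the other, and that a vertex flagged by ({\sf P2}) as ``not in a standard loop'' is never simultaneously forced into two different pseudoknots. This is where the $3$-noncrossing hypothesis does real work — it bounds how arcs from different blocks can cross — and where the definitions of ``minimal $\beta$-crossing'' and ``balanced'' must be used carefully (note the remark, illustrated in Fig.~\ref{F:pseudoloop}, that $\alpha$ can be minimal $\beta$-crossing while $\beta$ is not minimal $\alpha$-crossing, so symmetry cannot be assumed). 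Once disjointness is established, the rest is a routine induction on the interval $[n]$: peel off the outermost loop (standard or pseudoknot) at position $1$, apply the inductive hypothesis to each nested substructure $S_{\omega_h}^{\tau_h}$, and observe that part (a) guarantees the pseudoknot substructures are themselves built from shadows, closing the recursion.
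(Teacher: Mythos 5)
Your part (a) rests on the wrong decomposition. The ``sequence of shadows'' in the theorem is a \emph{layered} decomposition, not a left-to-right list of irreducible crossing components: the paper's proof takes the set $S^*$ of $\prec$-maximal arcs, observes that by $\sigma$-canonicity each sits in a stack of size $\ge\sigma$ and that their core is nonnesting (so the union of these stacks is the shadow of a motif), removes this shadow, checks that the remainder is still $k$-noncrossing and $\sigma$-canonical (no surviving stack contains a removed arc), and recurses on the number of arcs. Your blocks --- connected components of the line graph of the core --- are in general \emph{not} shadows: a single crossing-connected component can contain nested arcs (e.g.\ $(1,10)$, $(5,15)$, $(2,9)$ are pairwise linked through crossings, yet $(2,9)\prec(1,10)$), so the object you read off violates {\sf (M1)} and is not the shadow of any motif; conversely, one shadow of the paper's sequence typically collects $\prec$-maximal stacks from several of your components. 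Uniqueness of the core does not repair this; the step your argument is missing is precisely that peeling off the $\prec$-maximal layer preserves $\sigma$-canonicity, which is what makes the induction close.

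Part (b) has a related gap. You identify pseudoknot-loops with irreducible crossing blocks, but by ({\sf P1})(ii) an arc belongs to a pseudoknot only if it is minimal $\beta$-crossing for \emph{some} $\beta$; an arc $\alpha$ with $\mathscr{A}_{c(S)}(\alpha)\neq\varnothing$ that is minimal for no $\beta$ is \emph{not} a pseudoknot arc --- in the paper's proof it is colored green or blue, i.e.\ it closes an interior- or multi-loop --- even though it lies inside your irreducible block. Moreover ``remove the spanning intervals and decompose the complement as a secondary structure'' fails because the loop types interleave by nesting: a multi-loop can sit inside a pseudoknot and a pseudoknot inside a multi-loop, so the remaining loops do not live on the complement of the pseudoknot spans. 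The paper handles both issues at once by a recursive arc-coloring of $c(S)$ (red/green/blue/purple according to minimality in $\mathscr{A}_{c(S)}(\beta)$ and the number of maximal nested substructures) and then assigns to each vertex the color of the minimal non-red covering arc; disjointness and well-definedness are consequences of that single induction rather than separate facts to be checked. You correctly identify disjointness as the crux, but the mechanism you propose does not establish it.
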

%%%
%%%%%%%%%%%%%%%%%%%%%%%%%%%%%%%%%%%%%%%%%%%%%%%%%%%%%%%%%%%%%%%%%%%%%%%%%
%%%

\begin{proof}
Ad {\rm (a)}. Suppose $S$ is an arbitrary
$\langle k,\sigma\rangle$-structure over $[n]$. We
prove the theorem by induction on the number of
$S$-arcs. We consider the set of
$\prec$-maximal elements,
$
S^*=\{(i,j)\mid (i,j) \ \text{\rm is $\prec$-maximal}\}.
$
Clearly, $S^*$ induces a unique $\langle k,\sigma\rangle$-motif,
$\mathfrak{m}_{k,\sigma}(S)$, contained
in $S$. Indeed, since $S$ is by
 assumption $\sigma$-canonical, each $S^*$-arc occurs in a
 stack of size $\ge \sigma$. By definition, any
 $S$-arc which is contained in a stack containing
 an (unique) $S^*$-arc is an arc of an unique shadow,
 $\overline{\mathfrak{m}}_{k,\sigma}(S)$.
 Removing all arcs contained in
 $\overline{\mathfrak{m}}_{k,\sigma}(S)$ the
 remaining diagram is still $k$-noncrossing and
 $\sigma$-canonical. To see this it suffices to observe that
 any $S$-arc not contained in
 $\overline{\mathfrak{m}}_{k,\sigma}(S)$ is
 contained in a stack of size $\ge \sigma$ not containing
 any
 $\overline{\mathfrak{m}}_{k,\sigma}(S)$-arcs.
 Assertion (a) follows now by induction on the number of
 arcs. \\
Ad {\rm (b)}. Let $c(S)$ be the core of
$S$. We shall color the
 $c(S)$-arcs, $\alpha=(i,j)$, as follows: \\
%%%%%%%%%%%%%%%%%%%%%%%%%%%%%%%%%%%%%%%%%%%%%%%%%%%%%%%%%%%%%%%%%%%%
Case (1): $\mathscr{A}_{c(S)}(\alpha)
 \neq \varnothing$. \\
%%%%%%%%%%%%%%%%%%%%%%%%%%%%%%%%%%%%%%%%%%%%%%%%%%%%%%%%%%%%%%%%%%%%%
Since $c(S)$ is a
 $3$-noncrossing diagram, we have for any two $(i,j),(i',j')
\in \mathscr{A}_{c(S)}(\beta)$, either $(i,j)\prec
(i',j')$ or $j<i'$. Therefore for any $\beta \in
 \mathscr{A}_{c(S)}(\alpha)$ there exists an
 unique $\prec$-minimal arc $\alpha^*∗ \in
\mathscr{A}_{c(S)}(\beta)$ that is nested in
$\alpha$. If there exists some $\beta$ for which
$\alpha = \alpha^*(\beta)$ holds, i.e.~$\alpha$ itself is
 minimal in $\mathscr{A}_{c(S)}(\beta)$, then we
 color $\alpha$ red. In other words, red arcs are minimal
with respect to some crossing $\beta$. Otherwise, for any $\beta \in
 \mathscr{A}_{c(S)}(\alpha)$ there exists some
$\alpha^*(\beta)\prec \alpha$. If $\alpha^*(\beta)$ is the unique 
$\prec$-maximal substructure nested in $\alpha$, then we color 
$\alpha$ green and blue, otherwise. \\
%%%%%%%%%%%%%%%%%%%%%%%%%%%%%%%%%%%%%%%%%%%%%%%%%%%%%%%%%%%%%%%%%%%%%%%%%
Case (2): $\mathscr{A}_{c(S)}(\alpha)=
\varnothing$, i.e. $\alpha$ is noncrossing in
 $c(S)$. \\
%%%%%%%%%%%%%%%%%%%%%%%%%%%%%%%%%%%%%%%%%%%%%%%%%%%%%%%%%%%%%%%%%%%%%%%%%
If there exists no
 $c(S)$-arc $\alpha' \prec \alpha$, then we color $\alpha$
 purple, if there exists exactly one maximal
 $c(S)$-arc $\alpha' \prec \alpha$, we color
$\alpha$ green and blue, otherwise.
It follows now by induction on the number of
 $c(S)$-arcs that this procedure generates a
 well defined arc-coloring. Let $i\in [n]$ be a vertex. We
 assign to $i$ either the color of the minimal non-red
 $c(S)$-arc $(r,s)$ for which $r<i<s$ holds, or
 red if there exist only red $c(S)$-arcs, $(r,s)$
 with $r<i<s$ and black, otherwise. By construction, this
 induces a vertex-arc coloring with the property of correctly
identifying all hairpin- (purple arcs and vertices), interior-
(green arcs and vertices), multi- (blue
arcs and vertices) and pseudoknot (red arcs and vertices).
\end{proof}

%%%
%%%%%%%%%%%%%%%%%%%%%%%%%%%%%%%%%%%%%%%%%%%%%%%%%%%%%%%%%%%%%%%%%%%%%%%%%%
%%%

\begin{figure}[ht]
\centerline{\epsfig{file=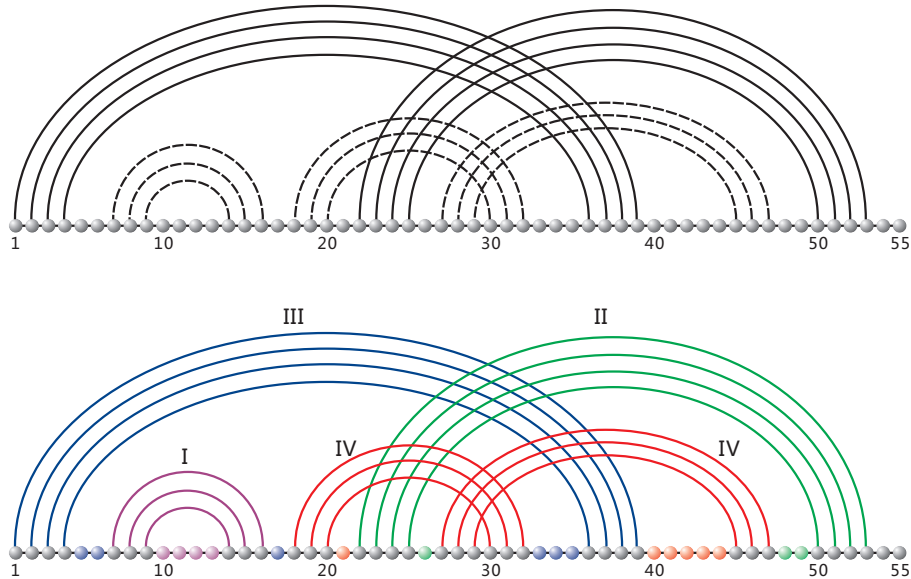,width=0.8\textwidth}\hskip8pt}
\caption{\small Shadows and loops: we give the sequence of shadows (top) 
and the loop-decomposition (below) illustrating Theorem~\ref{T:decompose}. 
Here {\sf I} (purple) is a hairpin-loop, {\sf II} (green)
represents an interior-loop, {\sf III} (blue) is a multi-loop 
and finally {\sf IV} (red) is a (balanced) pseudoknot.} 
\label{F:decompose}
\end{figure}

%%%
%%
In Fig.\ref{F:decompose} we show how these decompositions work.

%%%
%%%%%%%%%%%%%%%%%%%%%%%%%%%%%%%%%%%%%%%%%%%%%%%%%%%%%%%%%%%%%%%%%%%
%%%

\section{Phase I: motif-generation} \label{S:motifs}

%%%
%%%%%%%%%%%%%%%%%%%%%%%%%%%%%%%%%%%%%%%%%%%%%%%%%%%%%%%%%%%%%%%%%%%
%%%

The first step in {\sf cross} consists in creating some kind of shelling
of a $3$-noncrossing, canonical structure via motifs.
One key idea in {\sf cross} is the identification
of motifs as building blocks. The key point here is that, despite
the fact that motifs exhibit complicated crossings, they can be
{\it inductively} generated. This is remarkable and a result of
considering the ``dual'' of a motif which turns out to be a
restricted Motzkin-path. The latter is obtained via the bijection 
of Proposition~\ref{P:dual} between crossing and nesting arcs.

A Motzkin-path is composed by {\it up}-, {\it down}- and 
{\it horizontal}-steps. It starts at the origin, stays in the upper
halfplane and ends on the $x$-axis. Let ${\rm Mo}_k^\sigma(n)$
denote the following set of Motzkin-paths: \\
{\bf (a)} the paths have height $\le \sigma(k-1)$\\
{\bf (b)} all up- and down-steps come only in sequences of length
          $\sigma$ \\
{\bf (c)} all plateaux at height $\sigma$ have length $\ge 3$.\\
Let $\mu_{k-1,\sigma}(n)$ denote the number of
Motzkin-paths of length $n$ that (a') have height $\le \sigma(k-2)$,
(b') up- and down-steps come only in sequences of length $\sigma$.
We set for arbitrary $k, \sigma \ge 2$
\begin{eqnarray*}
G^*_{k,\sigma}(z) & = & \sum_{n\ge 0}\mu^*_{k,\sigma}(n)z^n \\
G_{k-1,\sigma}(z) & = & \sum_{n\ge 0}\mu_{k-1,\sigma}(n)z^n  \\
G_{1,\sigma}(z) & = & \frac{1}{1-z}.
\end{eqnarray*}
Now we are in position to give the main result of this section:
%%%
%%%%%%%%%%%%%%%%%%%%%%%%%%%%%%%%%%%%%%%%%%%%%%%%%%%%%%%%%%%%%%%%%%%
%%%
\begin{proposition}\label{P:dual}
Suppose $k,\sigma \ge 2$, then the following assertions hold:\\
{\rm (a)} There exists a bijection
\begin{equation}
 \beta:\mathbb{M}_k^\sigma(n)\longrightarrow
\text{\rm Mo}_k^\sigma(n) .
\end{equation}
{\rm (b)} We have the following recurrence equations
\begin{eqnarray}\label{E:mu1}
\mu^*_{k,\sigma}(n) & = & \mu^*_{k,\sigma}(n-1)+\sum_{s=0}^
{n-(2\sigma+3)} \mu_{k-1}(n-2\sigma-s)
\mu^*_{k,\sigma}(s) \quad \text{\rm for} \ n> 2\sigma \\
\label{E:mu2}
 \mu_{k,\sigma}(n) & = & \mu_{k,\sigma}(n-1)+\sum_{s=0}^
{n-2\sigma} \mu_{k-1}(n-2\sigma-s)
\mu_{k,\sigma}(s) \qquad \text{\rm for} \ n> 2\sigma-1.
\end{eqnarray}
where $\mu^*_{k,\sigma}(n)=1$ for $0\le n \le 2\sigma$ and
$\mu_{k-1,\sigma}(n)=1$ for $0\le n \le 2\sigma-1$.\\
{\rm (c)} We have the following formula for the generating functions
\begin{eqnarray}
\label{mu3}
 G^*_{k,\sigma}(z) & = & 
                    \frac{1}{1-z-z^{2\sigma}(G_{k-1,\sigma}(z)-(z^2+z+1))}\\ 
\label{mu3'}
 G_{k-1,\sigma}(z) & = & \frac{1}{1-z-z^{2\sigma}G_{k-2,\sigma}(z)}.
\end{eqnarray}
and, in particular, for $k=3$ we have the following asymptotic formula
\begin{equation}\label{mu4}
\mu^*_{3,\sigma}(n) \sim c_\sigma \left( \frac{1}
{\zeta_\sigma}\right)^n,
\end{equation}
where $c_\sigma$ and $\zeta_\sigma^{-1}$ are given by Tab.\ref{T:tabc}.
\end{proposition}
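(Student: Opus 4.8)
The plan is to prove (a), (b), (c) in that order, since each statement feeds into the next. For part (a), the bijection $\beta:\mathbb{M}_k^\sigma(n)\to\mathrm{Mo}_k^\sigma(n)$, the idea is to recall that a $\langle k,\sigma\rangle$-motif has, by {\sf (M1)}, a nonnesting core, so its core-arcs are the $\prec$-maximal arcs and they can only cross, never nest. Reading the diagram from left to right, each vertex is either isolated, a left-endpoint, or a right-endpoint of an arc; the standard encoding sends an isolated vertex to a horizontal step, a left-endpoint to an up-step, a right-endpoint to a down-step. Because all arcs sit in stacks of size exactly $\sigma$ (property {\sf (M2)}), up-steps and down-steps are forced to occur in runs of length exactly $\sigma$, giving condition (b) of $\mathrm{Mo}_k^\sigma(n)$; because the core is $k$-noncrossing (at most $k-1$ mutually crossing arcs) and nonnesting, the height never exceeds $\sigma(k-1)$, giving (a); and because the minimum arc-length is $\lambda\ge 4$ together with $\sigma\ge 3$, the "innermost" plateau — the plateau at maximal local height, corresponding to the $\lambda$-arc at the top of a stack — must span at least $3$ vertices, giving (c). I would then check injectivity and surjectivity by exhibiting the inverse: given a Motzkin path with these properties, match up-runs with down-runs in the nonnesting (first-in-first-out) fashion, then "inflate" each core-arc back to a $\sigma$-stack and insert $\lambda$-many vertices under each top arc. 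The main obstacle here is bookkeeping the arc-length condition $\lambda\ge 4$ precisely — making sure it is exactly condition (c), length $\ge 3$ for plateaux at height $\sigma$, and not an off-by-one — because the core-map does not preserve arc-length (as the paper emphasizes just before Definition~1) and because a top-arc of a stack contributes one unit of plateau for each of the $\lambda-1$ vertices strictly between its inflated endpoints after accounting for the $\sigma-1$ nested arcs; I would track this with a small explicit example matching Fig.~\ref{F:ma_reid2.eps}.

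For part (b), the recurrences \eqref{E:mu1} and \eqref{E:mu2}, I would argue combinatorially on the Motzkin-path side, using the bijection from (a). Take a path $P\in\mathrm{Mo}_k^\sigma(n)$ of length $n>2\sigma$ and look at its first step. If it is a horizontal step, deleting it gives a path of length $n-1$ with the same properties, contributing the term $\mu^*_{k,\sigma}(n-1)$. Otherwise $P$ begins with a forced up-run of length $\sigma$; let the matching down-run end at position $n-2\sigma-s$ for some $s\ge 0$ (the $2\sigma$ accounting for the two runs), so $P$ factors as: up-run, then an arbitrary sub-path of length $n-2\sigma-s$ that lives at height $\ge$ one notch (hence is counted by $\mu_{k-1,\sigma}$ after shifting the floor up by $\sigma$, which lowers the available height by one level), then the down-run, then a residual path of length $s$ counted by $\mu^*_{k,\sigma}(s)$. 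The constraint that the first plateau, if the whole top is a single plateau, has length $\ge 3$ is exactly what forces the lower limit of summation to stop at $s\le n-(2\sigma+3)$ and is why $\mu^*$ rather than $\mu$ appears on the left; equation \eqref{E:mu2} is the same decomposition without that innermost-plateau constraint, which is why its range runs to $s\le n-2\sigma$. The initial values $\mu^*_{k,\sigma}(n)=1$ for $0\le n\le 2\sigma$ and $\mu_{k-1,\sigma}(n)=1$ for $0\le n\le 2\sigma-1$ just say that below the threshold the only admissible path is all-horizontal. The subtle point to get right is the "shift the floor up by $\sigma$" step: one must verify that a path confined between height $\sigma$ and height $\sigma(k-1)$ with runs of length $\sigma$ is in bijection (by subtracting $\sigma$ from every height) with a path of height $\le\sigma(k-2)$ satisfying (a'), (b'), which is precisely the object counted by $\mu_{k-1,\sigma}$; I should double-check whether the innermost-plateau condition propagates into this sub-path or not, since \eqref{E:mu1} uses $\mu_{k-1}$ (plain) inside, suggesting it does not.

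For part (c), the generating-function identities \eqref{mu3} and \eqref{mu3'} follow by translating the recurrences into functional equations: multiplying \eqref{E:mu2} by $z^n$ and summing yields $G_{k-1,\sigma}(z)=\tfrac{1}{1-z}+z^{2\sigma}G_{k-2,\sigma}(z)G_{k-1,\sigma}(z)$ once the convolution is recognized, and solving for $G_{k-1,\sigma}$ gives \eqref{mu3'}; the base case $G_{1,\sigma}(z)=\tfrac{1}{1-z}$ is given. The identity \eqref{mu3} is the analogous manipulation of \eqref{E:mu1}, where the $-(z^2+z+1)$ correction term comes from subtracting off the small-$n$ discrepancy between $\mu^*$ and $\mu$ caused by the innermost-plateau condition (the first three "missing" terms). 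Finally, the asymptotic formula \eqref{mu4} for $k=3$ follows by singularity analysis: $G^*_{3,\sigma}(z)$ is, by \eqref{mu3} together with $G_{2,\sigma}(z)=\tfrac{1}{1-z-z^{2\sigma}/(1-z)}$ from \eqref{mu3'}, a rational function (for each fixed $\sigma$), so by Pringsheim its dominant singularity $\zeta_\sigma$ is a positive real pole, it is simple (one checks the denominator has a simple zero there, equivalently that $\zeta_\sigma$ is not a multiple root), and transfer theorems give $\mu^*_{3,\sigma}(n)\sim c_\sigma\,\zeta_\sigma^{-n}$ with $c_\sigma$ the residue-derived constant; the numerical values in Tab.~\ref{T:tabc} are then obtained by solving the (polynomial) denominator equation for each $\sigma$. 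I expect the only real work in (c) to be confirming that the dominant singularity is a simple pole and dominates strictly, which for a rational function with positive coefficients is routine but should be stated.
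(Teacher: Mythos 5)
Your overall route coincides with the paper's: the same left-to-right encoding of origins, termini and isolated vertices into $\sigma$-runs of up-/down-steps and horizontal steps, the same first-in-first-out pairing of runs to construct the inverse, the first-return (arch) decomposition of the paths for the recurrences, and partial fractions with a simple dominant pole for the asymptotics. The paper itself disposes of (b) and (c) with the single phrase ``straightforwardly derived,'' so the details you supply are exactly what is intended; your flagged worry about how $\lambda\ge 4$ translates into the plateau condition is a genuine subtlety that the paper's proof also passes over quickly.

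One concrete slip in part (c): the functional equation you display, $G_{k-1,\sigma}(z)=\tfrac{1}{1-z}+z^{2\sigma}G_{k-2,\sigma}(z)G_{k-1,\sigma}(z)$, is not what the recurrence \eqref{E:mu2} yields and does not solve to \eqref{mu3'}; solving it gives $\bigl((1-z)(1-z^{2\sigma}G_{k-2,\sigma}(z))\bigr)^{-1}$, whose denominator carries the spurious cross term $z^{2\sigma+1}G_{k-2,\sigma}(z)$. The correct translation of the ``first step is horizontal or an arch'' decomposition is $G_{k-1,\sigma}=1+zG_{k-1,\sigma}+z^{2\sigma}G_{k-2,\sigma}G_{k-1,\sigma}$, where the boundary terms coming from the initial values $\mu_{k-1,\sigma}(n)=1$ for $0\le n\le 2\sigma-1$ collapse to the constant $1$; this gives \eqref{mu3'} at once. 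Similarly, the $-(z^2+z+1)$ in \eqref{mu3} is not a discrepancy between $\mu^*$ and $\mu$ but the truncation $G_{k-1,\sigma}(z)-\mu_{k-1,\sigma}(0)-\mu_{k-1,\sigma}(1)z-\mu_{k-1,\sigma}(2)z^2$ of the inner series, which forces the elevated portion of the first arch to have length at least $3$, matching the upper summation limit $n-(2\sigma+3)$ in \eqref{E:mu1}. With these two local corrections your argument is the paper's.
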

%%%%%%%%%%%%%%%%%%%%%%%%%%%%%%%
\begin{table}
\begin{center}
\begin{tabular}{|c|c|c|c|c|c|c|c|}
\hline
       $\sigma$ & $2$  & $3$ &$4$ &$5$ &$6$ & $7$    \\
\hline $\zeta^{-1}_{\sigma}$ & 1.7424 &
1.5457 & 1.4397 & 1.3721 & 1.3247 & 1.2894  \\
\hline $c_\sigma$ & 0.1077 & 0.0948 & 0.0879 &
0.0840 & 0.0804 & 0.0780 \\
\hline
\end{tabular}
\end{center}
\centerline{}  \caption{\small The exponential growth rates of
$\mu^*_{3,\sigma}(n)$} \label{T:tabc}
\end{table}
%%%%%%%%%%%%%%%%%%%%%%%%%%%%%%%%%%%%%%
%%%
%%%%%%%%%%%%%%%%%%%%%%%%%%%%%%%%%%%%%%%%%%%%%%%%%%%%%%%%%%%%%%%%%%%
%%%
\begin{proof}
Let $\mathfrak{m}$ be a $\langle k,\sigma\rangle$-motif.
We construct the bijection $\beta$ as follows: reading the
vertex labels
 of $\mathfrak{m}$ in increasing order we map each
$\sigma$-tuple of origins and termini into a $\sigma$-tuple
 of \textit{up-}steps and \textit{down-}steps,
 respectively. Furthermore isolated points are mapped into
 \textit{horizontal-}steps. The resulting paths are by
 construction Motzkin-paths of height $\le \sigma(k-1)$.
 Since motifs have arcs of length $\ge 4$ the paths have
at height $\sigma$ plateaux of length $\ge 3$. In addition
we have
 $\sigma$-tuples of up- and down-steps. Therefore $\beta$
 is well defined. To see that $\beta$ is bijective we
 construct its inverse explicitly. Consider an element
$\zeta\in \text{\rm Mo}^\sigma_k(n)$. We shall pair
 $\sigma$-tuples of up-steps and down-steps as follows:
 starting from left to  right we pair the first up-step with
 the first down-step tuple and proceed inductively, see 
Fig.\ref{F:mozi}.
%%%
%%%%%%%%%%%%%%%%%%%%%%%%%%%%%%%%%%%%%%%%%%%%%%%%%%%%%%%%%%%%%%%%%%%
%%%
\begin{figure}[ht]
\centerline{\epsfig{file=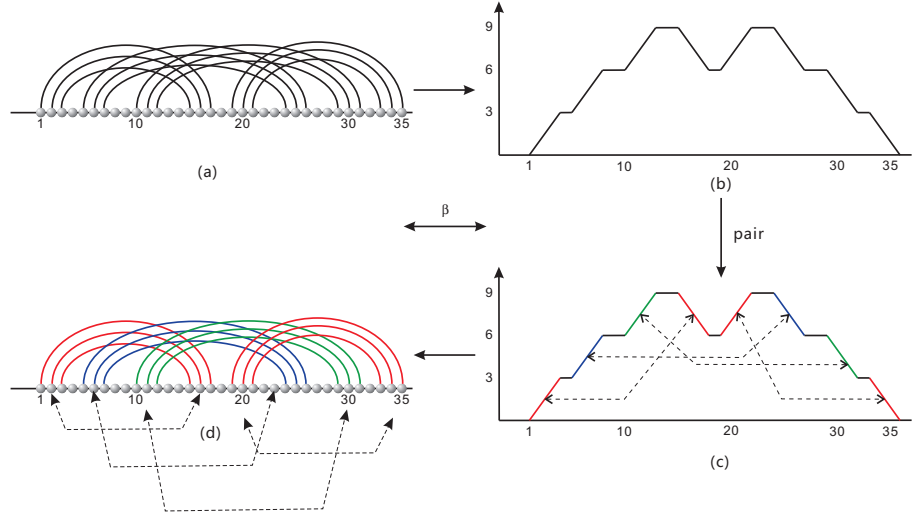,width=0.8\textwidth}\hskip8pt}
\caption{\small The bijection $\beta$: First we have a map 
from {\sf (a)} to {\sf (b)}. Then we pair the $\sigma$-tuples of up-steps 
and down-steps, see the vertical map from {\sf (b)} to {\sf (c)}. 
The so derived pairs, see the horizontal map from {\sf (c)} to {\sf (d)},
allow to reconstruct the original motif.}
\label{F:mozi}
\end{figure}
%%%
%%%%%%%%%%%%%%%%%%%%%%%%%%%%%%%%%%%%%%%%%%%%%%%%%%%%%%%%%%%%%%%%%%%
%%%
It is
 clear from the definition of Motzkin-paths that this pairing
 procedure is well defined. Each such pair
$$
((u_i,u_{i+1},\ldots,u_{i+\sigma},(d_j,d_{j+1},\ldots,
j_{j+\sigma}))
$$
corresponds uniquely to the sequence of arcs
$((i+\sigma,j),\ldots,(i,j+\sigma))$ from which we can
 conclude that $\zeta$ induces a unique $\sigma$-canonical
 diagram, $\delta_\zeta$ over $[n]$. Furthermore
 $\delta_\zeta$ has by construction a nonnesting core. A
 diagram contains a $k$-crossing if and only if it contains
 a sequence of arcs $(i_1,j_1),\ldots,(i_k,j_k)$ such that
 $i_1<i_2<\cdots<i_k<j_1<j_2<\cdots<j_k$. Therefore
 $\delta_\zeta$ is $k$-noncrossing if and only if its
 underlying path $\zeta$ has height $< \sigma k$. We
 immediately derive $\beta(\delta_\zeta) = \zeta$, whence
$\beta$ is a bijection. Using the Motzkin-path
 interpretation we immediately observe that
$\text{\rm Mo}_k^\sigma(n)$-paths can be constructed
 recursively from paths that start with a horizontal-step or
 an up-step, respectively. The recursions eq.~(\ref{E:mu1}) and
 eq.~(\ref{E:mu2}) and the
 generating functions of eq.~(\ref{mu3}) and eq.~(\ref{mu3'}) 
are straightforwardly derived.
As for the particular case $G^*_{3,\sigma}(z)$, we have
\begin{equation}
 G^*_{3,\sigma}(z)=\frac{1}{1-z-z^{2\sigma}\left[\frac{1}
{1-z-z^{2\sigma}[\frac{1}{1-z}]}-(z^2+z+1)\right]}.
\end{equation}
The unique dominant, real singularities of
 $G^*_{3,\sigma}(z)$ are simple poles, denoted by
$\zeta_\sigma$. Being a rational function,
 $G^*_{k,\sigma}(z)$ admits a partial fraction expansion
$$
G^*_{k,\sigma}(z)=H(z)+\sum_{(\zeta,r)}\frac{c_{(\zeta,r)}}
{(\zeta-z)^r}
$$
and eq.~(\ref{mu4}) follows in view of
\begin{equation}
[z^n]\frac{1}{\zeta-z}=\frac{1}{\zeta}[z^n]\frac{1}
{1-z/\zeta}=\frac{1}{\zeta}{n \choose 0}\left(\frac{1}
{\zeta}\right)^n=\left(\frac{1}{\zeta}\right)^{n+1}.
\end{equation}
\end{proof}

%%%
%%%%%%%%%%%%%%%%%%%%%%%%%%%%%%%%%%%%%%%%%%%%%%%%%%%%%%%%%%%%%%%%%%%%%%%%%%%%%%
%%%

\section{Phase II: the skeleta-tree} \label{S:skeleta}

%%%
%%%%%%%%%%%%%%%%%%%%%%%%%%%%%%%%%%%%%%%%%%%%%%%%%%%%%%%%%%%%%%%%%%%%%%%%%%%%%%
%%%

In this section we enter the second phase of {\sf cross}. What will happen
here, is that each irreducible shadow, generated during the first phase
described in Section~\ref{S:motifs}, gives rise to a tree of skeleta.
The intuition behind this construction is that each tree-vertex, i.e.~each
skeleton, represents a maximal ``non-inductive'' arc configuration.
This does not mean that a skeleton contains all crossings arcs of the final
structure, but all further crossings are derived by adding independent
substructures. In other words: their energy contributions are additive.

A skeleton, $S$, is a $3$-noncrossing structure whose core has no noncrossing
arcs, i.e.~for any arc $\alpha$ we have 
$\mathscr{A}_S(\alpha)\neq \varnothing$,
see Fig.\ref{F:IS}. In addition, in a skeleton over the segment 
$\{i,i+1,\dots,j-1,j\}$, $S_{i,j}$, the positions $i$ and $j$ are paired.
Recall that an interval is a sequence of consecutive, unpaired bases 
$(i,i+1,\cdots ,j)$, where $i-1$ and $j+1$ are paired.
Furthermore, recall that a stack of length $\sigma$ (see eq.~(\ref{E:stack})) 
is a sequence of parallel arcs
$
((i,j),(i+1,j-1),\ldots, (i+(\sigma-1),j-(\sigma-1)))
$,
which we write as $(i,j,\sigma)$. Note that
$\sigma\geq \sigma_0$, where $\sigma_0$ is the minimum stack length
of the structure, see Fig.\ref{F:IS}. An irreducible shadow over 
$\{i,i+1,\dots,j-1,j\}$ is denoted by $IS_{i,j}$. It is a particular 
skeleton, i.e.~a skeleton in which there are no nested arcs.

{\begin{remark}
In our implementation of {\sf cross}, the number of stacks of an irreducible
shadow is an input parameter. As default we set its maximum value to be three.
\end{remark}
%%%
%%%%%%%%%%%%%%%%%%%%%%%%%%%%%%%%%%%%%%%%%%%%%%%%%%%%%%%%%%%%%%%%%%%%%%%%%%%%%%
%%%
\begin{figure}[ht]
\centerline{\epsfig{file=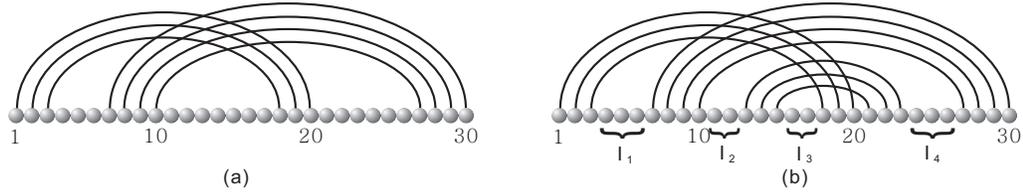,width=0.9\textwidth}\hskip8pt}
\caption{\small Irreducible shadows and skeleta: an irreducible shadow (a), 
containing the stack $(1,20,3)$ and $(7,30,4)$. (b) A skeleton drawn with 
its four induced intervals $I_1,I_2,I_3,I_4$.} \label{F:IS}
\end{figure}
%%%
%%%%%%%%%%%%%%%%%%%%%%%%%%%%%%%%%%%%%%%%%%%%%%%%%%%%%%%%%%%%%%%%%%%%%%%%%%%%%%
%%%
We are now in position to construct the skeleta-tree.
Suppose we are given a $3$-noncrossing skeleton, $S$. We label the
$S$-intervals $\{I_1,\dots,I_m\}$ from left to right and consider pairs
$(S,r)$, where $r$ is an integer $1\le r\le m-1$.
Given a pair $(S,r)$ we construct new pairs $(S',r')$ where $r'\geq r$ as
follows:
we replace a pair of intervals $(I_p,I_q)$, $i\in I_p,j\in I_q$, $i\ge r$
by the stack $\alpha=(i,j,\sigma)$, subject to the following
conditions
\begin{itemize}
\item $S'$ is a $3$-noncrossing skeleton
\item $(i+\sigma-1,j-\sigma+1)$ is a minimal element in $(S',\prec)$
\item $r'$ is the label of the first paired base preceding the
      interval $I_p$.
\item $i-1$ and $j+1$ are not paired to each other.
\end{itemize}
Fig.\ref{F:insert} displays the two basic scenarios via which stacks
are being inserted.
%%%%
%%%%%%%%%%%%%%%%%%%%%%%%%%%%%%%%%%%%%%%%%%%%%%%%%%%%%%%%%%%%%%%%%%%%%%%%%
%%%%
\begin{figure}[ht]
\centerline{\epsfig{file=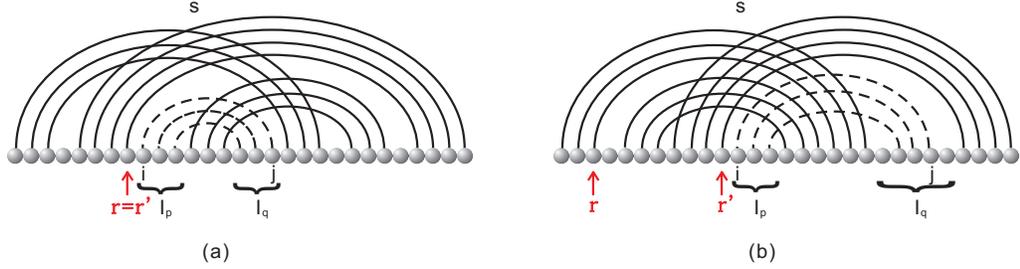,width=0.9\textwidth}\hskip8pt}
\caption{\small Stack-insertion: if the origin of the inserted stack
$(i,j,\sigma)$ is smaller than that of its predecessor (a), then $r=r'$.
Paraphrasing the situation we can express this as  ``left-insertion''
freezes the index $r$. Accordingly, (b) showcases the ``right-insertion'',
with its induced shift of the indices $r\mapsto r'$, both indices are
drawn in red. }
\label{F:insert}
\end{figure}
%%%
%%%%%%%%%%%%%%%%%%%%%%%%%%%%%%%%%%%%%%%%%%%%%%%%%%%%%%%%%%%%%%%%%%%%%%%%
%%%
We refer to the above procedure as $(i,j,\sigma)$-insertion and formally
express it via
\begin{equation}
(S,r)\Rightarrow_{(i,j,\sigma)} (S',r').
\end{equation}
Given a pair $(S,r)$ subsequent insertions induce a directed graph,
$G_{(S,r)}$, whose vertices are pairs $(S',r')$ and whose (directed)
arcs are given by
\begin{equation}
((S,r),(S',r')),\quad \text{\rm where}\quad
(S,r)\Rightarrow_{(i,j,\sigma)} (S',r').
\end{equation}
\begin{remark}
Note that the algorithm checks whether $(i,j,\sigma)$ can be added,
i.e.~(1) the bases $\{i,i+1,\cdots,i+\sigma-1,j-\sigma+1,\cdots,j-1,j\}$
are indeed unpaired and (2) $(i-1,j+1)$ is not a base pair.
The second property guarantees that the core of the stack $(i,j,\sigma)$
is an arc in the core of $S'$.
\end{remark}
We proceed by showing that $G_{(S,r)}$ is in fact a tree. In other words, the
insertion-procedure is an unambiguous grammar.
%%%
%%%%%%%%%%%%%%%%%%%%%%%%%%%%%%%%%%%%%%%%%%%%%%%%%%%%%%%%%%%%%%%%%%%%%%%%%%%%%%
%%%
\begin{proposition}\label{P:tree}
Let $T_1=\{S\mid \exists\, r;\, (S,r)\in T\}$ and $S_0$ be a 
$3$-noncrossing skeleton.\\
{\rm (a)} $G_{(S_0,r_0)}$ is a tree and for any two different vertices 
         $(S'_1,r'_1)$ and $(S'_2,r'_2)$
in  $G_{(S,r_0)}$, we have $S'_1\neq S'_2$.\\ 
{\rm (b)} For $k>3$, the graph morphism $\pi\colon \mathbb{T}\longrightarrow \mathbb{T}_1$, 
          given by $\pi((S,r))=S$ is not bijective.
\end{proposition}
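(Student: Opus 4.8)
The plan is to treat the two parts separately; the substance of (a) is an unambiguity statement for the insertion grammar, and (b) is then a short observation plus an explicit example.

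\textbf{Part (a).} Connectedness of $G_{(S_0,r_0)}$ is built in: every vertex arises from $(S_0,r_0)$ by a finite sequence $(S_0,r_0)\Rightarrow^{*}(S',r')$ of $(i,j,\sigma)$-insertions, hence lies on a directed path from the root, and the root has no in-edge since each insertion strictly increases the number of stacks. So $G_{(S_0,r_0)}$ is a tree as soon as every vertex $(S',r')\neq(S_0,r_0)$ has a \emph{unique} in-edge. I would prove, by induction on the number of stacks of $S'$ not already present in $S_0$, the two assertions simultaneously: (i) $r'$ is uniquely determined by $S'$, i.e.\ there is exactly one $r'$ with $(S',r')\in G_{(S_0,r_0)}$; and (ii) $(S',r')$ has a unique parent. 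The base case $S'=S_0$ is trivial ($(S_0,r_0)$ is the only pair over $S_0$). For the inductive step, the point is to identify the last-inserted stack intrinsically from the data $(S',r')$: among the \emph{removable} stacks $\alpha=(i,j,\sigma)$ of $S'$ — those that are $\prec$-minimal, have $(i-1,j+1)$ not an arc (so the core arc of $\alpha$ persists in the core of $S'$), and whose deletion leaves a $3$-noncrossing skeleton — exactly one, call it $\alpha^{*}$, is compatible with the recorded index in the sense that after deleting it, $r'$ equals the label of the first paired base preceding the interval reabsorbing its origin. Any last step of any derivation reaching $(S',r')$ must delete this $\alpha^{*}$ and land at $(S'',r'')=(S'\setminus\alpha^{*},r'')$, which lies in $G_{(S_0,r_0)}$ and has one fewer extra stack; by the inductive hypothesis $(S'',r'')$ is the unique vertex over $S''$, and then $r''$ together with $\alpha^{*}$ forces $r'$. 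This yields (ii) for all vertices, hence the tree property, and (i) is exactly the second claim of (a): two distinct vertices cannot share a skeleton, since the skeleton determines the index.

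The \emph{main obstacle} is the uniqueness of the removable stack $\alpha^{*}$ compatible with $r'$, equivalently the unambiguity of the grammar; this is the only place the hypothesis $k=3$ is essential. As in the proof of Theorem~\ref{T:decompose}, in a $3$-noncrossing diagram any two arcs crossing a fixed arc $\beta$ satisfy $(i,j)\prec(i',j')$ or $j<i'$, so the candidate "last" stacks are linearly arranged and cannot be interchanged; the "freeze under left-insertion, shift under right-insertion" behaviour of $r$ recorded in Fig.~\ref{F:insert} then pins down which one it is. One has to verify, along the two scenarios of Fig.~\ref{F:insert}, that no $S'$ carries two distinct removable stacks each consistent with the same index $r'$.

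\textbf{Part (b).} First observe that $\pi$ is surjective, since every $S\in\mathbb{T}_1$ is by definition $\pi((S,r))$ for some $r$; hence $\pi$ is non-bijective precisely when it is non-injective, i.e.\ when some skeleton $S$ is reachable with two distinct terminal indices $r_1\neq r_2$. For $k\geq4$ I would exhibit such an $S$ explicitly: a $4$-noncrossing skeleton containing three mutually crossing arcs (impossible for $k=3$) can be assembled from a common root $(S_0,r_0)$ by inserting the same two stacks in either order — once as a left-insertion followed by a right-insertion, once the other way round — and since a left-insertion freezes $r$ while a right-insertion advances it, the two derivations end at $(S,r_1)$ and $(S,r_2)$ with $r_1\neq r_2$. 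This is exactly the ambiguity that the $3$-noncrossing restriction rules out in part (a): with at most two mutually crossing arcs the two stacks are forced into a $\prec$-chain and the insertion order becomes rigid. Producing one such pair of derivations suffices; a small configuration with about three stacks should do it, and drawing it is the only real content of part (b).
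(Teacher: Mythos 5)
Your overall architecture for (a) — reduce the tree property to ``every non-root vertex has a unique in-edge'' and prove it by induction on the number of inserted stacks — is sound, and you have correctly located the crux: the unambiguity of the last insertion step, and the fact that $k=3$ is what makes it work. But you stop exactly there. The sentences ``exactly one, call it $\alpha^{*}$, is compatible with the recorded index'' and ``one has to verify \ldots that no $S'$ carries two distinct removable stacks each consistent with the same index $r'$'' are not a proof; they are a restatement of the claim. The entire content of the argument is the verification you defer, and the appeal to the linear arrangement of arcs crossing a fixed arc plus the freeze/shift behaviour of $r$ does not by itself yield it. What is actually needed (and what the paper does) is a confluence contradiction: assume two stacks $\alpha=(i_\alpha,j_\alpha,\sigma_\alpha)\neq\beta=(i_\beta,j_\beta,\sigma_\beta)$ can each be the last insertion producing the same skeleton from a common ancestor $(\tilde S,\tilde r)$, with say $i_\alpha<i_\beta$. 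Following the path that inserts $\beta$ first and then $\alpha$, the index constraint forces $r_1<i_\alpha$ and $[r_1+1,i_\beta-1]$ to be an interval; combined with $\prec$-minimality of $\alpha$ this rules out $j_\alpha<i_\beta$ (then $\alpha$ would be noncrossing in the result, contradicting the skeleton condition) and $j_\alpha>j_\beta$ (then $\beta\prec\alpha$), so $\alpha$ and $\beta$ cross. Following the other path, which inserts $\alpha$ into $\tilde S$ first, the skeleton condition forces $\alpha$ to cross some stack $\gamma$ already present in $\tilde S$; the same positional constraints (three cases, $i_\gamma>j_\beta$, $j_\gamma\le r_1$, or $i_\gamma\le r_1<j_\beta<j_\gamma$, none of which lets $\gamma$ reach into $[r_1+1,i_\beta-1]$) show $\gamma$ must cross $\beta$ as well. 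Then $\alpha,\beta,\gamma$ are mutually crossing, contradicting $3$-noncrossing. Without this case analysis your induction does not close; note also that your simultaneous claim (i) (``$r'$ is determined by $S'$'') cannot be extracted from a uniqueness statement that is conditioned on a \emph{given} $r'$ — two vertices $(S',r_1')\neq(S',r_2')$ could a priori each have their own compatible removable stack, and ruling that out is again the confluence argument above.

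Part (b) is fine and matches the paper: $\pi$ is surjective by definition of $\mathbb{T}_1$, so non-bijectivity amounts to exhibiting one skeleton reachable with two distinct indices, which a $4$-noncrossing configuration with three mutually crossing stacks provides (the paper's Fig.~\ref{F:ske-4non}); your left-then-right versus right-then-left derivation of the same skeleton is exactly the intended example. Just be aware that ``drawing it'' is the whole of part (b), so the example must actually be produced and checked against the four insertion conditions.
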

%%%
%%%%%%%%%%%%%%%%%%%%%%%%%%%%%%%%%%%%%%%%%%%%%%%%%%%%%%%%%%%%%%%%%%%%%%%%%%%%%%
%%%
\begin{remark}
For any $k>3$, $G_{(S_0,r_0)}$ is a tree. However assertion (b)
indicates that it is {\it really} a tree of pairs. That means, 
stack-insertions will in general generate two different pairs with 
equal first coordinate.
\end{remark}

\begin{proof}
We prove assertion (a) by induction on the number of inserted 
arcs, $\ell$.
For $\ell=0$ there is nothing to prove. For $\ell=1$, the pairs
$(S,r_0)$ and $(S',r')$ differ by exactly one stack, $(i,j,\sigma)$,
whence the assertion.
%%%%%%%%%%%%%%%%%%%%%%%%%%%%%%%%%%%%%%%%%%%%%%%%%%%%%%%%%%%%%%%%%%%%%%%%%%%%%
Our objective is now to show that for any two $(S_1',r_1')$ and $(S'_2,r'_2)$ 
obtained from the root $(S,r_0)$  via $\ell$ insertions, $S_1'\neq S_2'$ holds.
Suppose there exists some $(\tilde{S},\tilde{r})$, such that
\begin{equation}
\diagram
& (\tilde{S},\tilde{r}) \dlto_{\rm inertion}\drto^{\rm
insertion}& \\
(S'_1,r'_1) & & (S'_2,r'_2) 
\enddiagram
\end{equation}
If the inserted stacks coincide, we have $(S_1',r_1') = (S_2',r_2')$
and there is nothing to prove. Otherwise, we obtain $S'_1\ne S'_2$, 
which implies $(S_1',r_1') \ne (S_2',r_2')$, whence (a).
Suppose next, we have the following situation
\begin{equation}
\diagram
 & &(S_0,r_0)\dlto_{\text{\rm unique
 path}}\drto^{\text{\rm unique path}}
& \\
 & (S_1,r_1) \dto_{\text{\rm insertion}} & & (S_2,r_2)
\dto^{\text{\rm insertion}}\\
 & (S'_1,r'_1) &  & (S'_2,r'_2)\\
\enddiagram
\end{equation}
where the uniqueness of the paths ending at $(S_1,r_1)$ and $(S_2,r_2)$
is guaranteed by the induction hypothesis.
By assumption we have $(S_1,r_1)\ne (S_2,r_2)$ and $S_1$ and $S'_1$ as well 
as $S_2$ and $S'_2$ differ by exactly one stack. 
Again by induction hypothesis, we have $S_1\ne S_2$, whence
\begin{equation}
(S_1,r_1)\Rightarrow_{\alpha=(i_\alpha,j_\alpha,\sigma_\alpha)} (S_1',r_1'),\
(S_2,r_2)\Rightarrow_{\beta=(i_\beta,j_\beta,\sigma_\beta)} (S_2',r_2') \quad
\text{\rm and}\quad
 S_1\neq S_2.
\end{equation}
We now prove the inductive step by contradition. Suppose we have $S'_1=S'_2$,
then we can conclude that $\alpha\neq \beta$ and there exists some 
$(\tilde{S},\tilde{r})$ such that
\begin{equation}\label{E:murx1}
\diagram
& & (S,r_0)\dto^{\text{\rm unique path}} & \\
& &(\tilde{S},\tilde{r})\dlto_{\beta}\drto^{\alpha} & \\
& (S_1,r_1) \dto_{\alpha} & & (S_2,r_2) \dto^{\beta} \\
&(S'_1,r'_1) & & (S'_2,r'_2)
\enddiagram
\end{equation}
Indeed, we define $\tilde{S}$ to be the skeleton derived from
$(S_0,r_0)$ by inserting all $S'_1$-arcs except of 
$\alpha,\beta$. It is
clear that the skeleton $\tilde{S}$ exists since its stack-set is a subset of 
the stack-set of $S'_1$. By construction, $\tilde{S}$
differs from $S_1$ and $S_2$ via the stacks $\alpha$ and $\beta$,
respectively. By induction hypothesis, there exists a unique path
from $(S,r_0)$ to $(\tilde{S},\tilde{r})$, which implies the
existence of a unique $\tilde{r}$. Furthermore, by induction
hypothesis, the paths from $(S_0,r_0)$ to $(S_1,r_1)$ and $(S_2,r_2)$ are 
unique and consequently contain $(\tilde{S},\tilde{r})$, whence we have 
the situation given in eq.~(\ref{E:murx1}).\\
As $\alpha$ and $\beta$ are both minimal, without loss of generality we 
may assume $i_\alpha< i_\beta$.
Let us consider the insertion-path $(\tilde{S},\tilde{r})\Rightarrow_\beta 
(S_1,r_1) \Rightarrow_\alpha (S_1',r_1')$. According to this insertion, we 
obtain $r_1<i_\alpha$ and by construction $[r_1+1,i_\beta-1]$ is an 
$S_1$-interval.
If $j_\alpha<i_\beta$, then $\alpha$ does not cross any arcs in $S'_1$, 
which is impossible. If $j_\alpha>j_\beta$, we arrive at $\beta\prec 
\alpha$, which contradicts minimality of $\alpha$. 
Therefore, we have  $i_\beta<j_\alpha<j_\beta$, i.e.~the arcs $\alpha$ and
$\beta$ are crossing.
Next we consider $(\tilde{S},\tilde{r})\Rightarrow_\alpha
(S_2,r_2) \Rightarrow_\beta (S_2',r_2')$. Accordingly, $\alpha$ must be 
crossed by some $(\tilde{S},\tilde{r})$-stack, say $\gamma=(i_\gamma, 
j_\gamma,\sigma_\gamma)$. We next put $\gamma$ into the context of the 
insertion-path
$(\tilde{S},\tilde{r})\Rightarrow_\beta (S_1,r_1) \Rightarrow_\alpha 
(S_1',r_1')$ and observe that $\gamma$ necessarily crosses $\beta$. 
Indeed, otherwise we have the following three scenarios: 
$i_\gamma>j_\beta$, $j_\gamma\le r_1$ or $i_\gamma\le r_1, j_\gamma>j_\beta$. 
In all three cases $\gamma$ cannot cross $\alpha$ since $i_\gamma,
j_\gamma\not\in [r_1+1,i_\beta-1]$, see Fig.\ref{F:proveofske}.
%%%
%%%%%%%%%%%%%%%%%%%%%%%%%%%%%%%%%%%%%%%%%%%%%%%%%%%%%%%%%%%%%%%%%%%%%%%%
%%%
\begin{figure}[ht]
\centerline{\epsfig{file=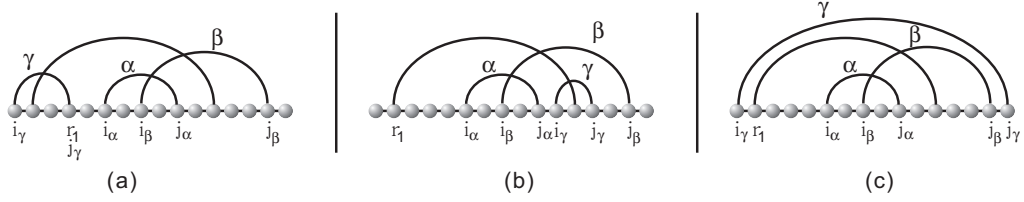,width=0.9\textwidth}\hskip8pt}
\caption{\small Illustration of the proof of Proposition~\ref{P:tree}.
The three different scenarios for a noncrossing $\gamma$,
representing stacks by isolated arcs.
(a) $j_\gamma\le r_1$, (b) $i_\gamma>j_\beta$ and (c) $i_\gamma\le r_1$,
$j_\gamma>j_\beta$.  }
\label{F:proveofske}
\end{figure}
%%%
%%%%%%%%%%%%%%%%%%%%%%%%%%%%%%%%%%%%%%%%%%%%%%%%%%%%%%%%%%%%%%%%%%%%%%%%
%%%
As a result, $\gamma$ necessarily crosses both stacks: $\alpha$ and $\beta$, 
which is a contradition to the fact that $S'_1$ is a $3$-noncrossing 
skeleton, whence $S'_1\ne S'_2$. In particular we obtain
$(S'_1,r'_1)\ne (S'_2,r'_2)$, the insertion path is
unique and $G_{(S,r_0)}$ is a tree. \\
In order to prove (b) we provide via Fig.\ref{F:ske-4non} an example, where
the implication $(S_1,r_1)\neq (S_2,r_2)\,\Rightarrow 
S_1\neq S_2$ does not hold. Note that $\mathbb{T}_{(S_0,r_0)}$ is still a 
tree.
\end{proof}

Next we prove that our unambiguous grammar indeed generates any
skeleton, which contains a given irreducible shadow.
%%%
%%%%%%%%%%%%%%%%%%%%%%%%%%%%%%%%%%%%%%%%%%%%%%%%%%%%%%%%%%%%%%%%%%%%%%
%%%
\begin {proposition}
Suppose we are given an irreducible shadow $S_0=IS_{i,j}$.
Let $\mathbb{T}(S_0)=G_{(S_0,0)}$ denote ist skeleton-tree and
let $\mathbb{S}(S_0)$ be the set of all skeleta, that contain $S_0$.
Then we have
\begin{equation}
\mathbb{T}(S_0)=\mathbb{S}(S_0).
\end{equation}
\end {proposition}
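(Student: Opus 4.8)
The plan is to establish the equality $\mathbb{T}(S_0)=\mathbb{S}(S_0)$ by proving the two inclusions separately. The inclusion $\mathbb{T}(S_0)\subseteq \mathbb{S}(S_0)$ is the soundness direction: I would argue by induction on the number of inserted stacks that every pair $(S',r')$ produced from $(S_0,0)$ has $S'$ a $3$-noncrossing skeleton containing all the arcs of $S_0$. The base case is trivial, and the inductive step is essentially built into the definition of $(i,j,\sigma)$-insertion, since the first bulleted condition requires $S'$ to be a $3$-noncrossing skeleton and the insertion only \emph{adds} a stack, never removes one; so $S_0$'s arcs persist. This direction should be routine.

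The substantive direction is completeness: $\mathbb{S}(S_0)\subseteq \mathbb{T}(S_0)$, i.e.\ every skeleton $S$ containing $S_0$ arises from $(S_0,0)$ by a sequence of insertions. Here I would induct on the number of arcs of $S$ not already in $S_0$ (equivalently, the number of stacks of the core $c(S)$ that are not core-arcs of $S_0$). If this number is zero then $S=S_0$ and we are done. Otherwise I must identify one stack $\alpha=(i,j,\sigma)$ of $S$ that can be "peeled off last", meaning $S\setminus\alpha$ is still a skeleton containing $S_0$ and the insertion $(S\setminus\alpha,\,r)\Rightarrow_\alpha (S,r')$ is legal for the appropriate bookkeeping index $r$. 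The natural candidate is a $\prec$-minimal stack among those not belonging to $S_0$: minimality guarantees the third and fourth insertion conditions are compatible (the core of $\alpha$ becomes a $\prec$-minimal core-arc, as required), and removing a $\prec$-minimal stack leaves the remaining diagram $3$-noncrossing and, crucially, still a skeleton — every remaining arc must still be crossed by some remaining arc. This last point is where I expect the main obstacle: one must rule out the possibility that the \emph{only} arc crossing some remaining arc $\beta$ was exactly a $\prec$-minimal non-$S_0$ stack $\alpha$, for then removing $\alpha$ would make $\beta$ noncrossing and $S\setminus\alpha$ would fail to be a skeleton. The resolution should use that $S_0$ is an \emph{irreducible} shadow: its line-graph is connected and it has no nested arcs, so the $S_0$-arcs already form a rich crossing pattern; combined with the fact that $S$ is a skeleton (no noncrossing core-arcs at all), one can show that among the candidate minimal stacks there is always at least one whose removal is harmless — essentially because if removing every minimal non-$S_0$ stack broke the skeleton property, one could trace a descending chain of crossing dependencies that contradicts finiteness or forces a nesting into an $S_0$-arc that cannot be a last insertion.

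With such a "safe removable stack" $\alpha$ in hand, the inductive step proceeds: set $S''=S\setminus\alpha$, apply the induction hypothesis to get a path from $(S_0,0)$ to some $(S'',r)$ in $\mathbb{T}(S_0)$, then verify that appending the insertion $\Rightarrow_\alpha$ is legal, which reduces to checking the four bulleted conditions. The first ($3$-noncrossing skeleton) holds since $S$ itself is one; the second ($\prec$-minimality of the core of $\alpha$ in $S$) holds by our choice of $\alpha$ as a $\prec$-minimal non-$S_0$ stack; the third fixes $r'$ as the label of the first paired base before the left interval receiving $\alpha$, which is determined; and the fourth ($i-1,j+1$ not paired to each other) holds because the core of $\alpha$ must be an arc of $c(S)$, using the same reasoning recorded in the Remark after the insertion definition. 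One technical wrinkle is that the bookkeeping index $r$ carried to $(S'',r)$ by the induction hypothesis must satisfy $i\ge r$ so that $\alpha$ is eligible; I would handle this by choosing, among all safe removable minimal stacks, one with \emph{largest} origin $i$, and arguing the induction hypothesis can always be arranged to deliver an index $r$ with $r\le i$ — intuitively, the frozen/shifted index $r$ only advances past intervals lying entirely to the left of everything inserted later, so a last-inserted stack with maximal origin is never "locked out". Putting the two inclusions together yields $\mathbb{T}(S_0)=\mathbb{S}(S_0)$.
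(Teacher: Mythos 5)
Your soundness inclusion $\mathbb{T}(S_0)\subseteq\mathbb{S}(S_0)$ coincides with the paper's. For the completeness inclusion you take a genuinely different route: the paper argues \emph{forward}, writing $\mathscr{A}_S=\mathscr{A}_{S_0}\,\dot\cup\,\mathscr{A}_S^{\mathrm{ne}}$, sorting the nested stacks by their leftmost paired base and inserting them one at a time (always taking the next stack in this order that crosses the structure built so far), whereas you argue \emph{backward}, peeling off a putative last-inserted stack and inducting on the number of non-$S_0$ stacks. A correct backward argument would be a legitimate alternative, but it places the entire weight of the proof on a single lemma that you do not establish.

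That lemma --- ``among the $\prec$-minimal non-$S_0$ stacks there is always one whose removal leaves a skeleton'' --- is exactly the obstacle you flag, and your sketched resolution does not close it. Irreducibility of $S_0$ only guarantees that the \emph{$S_0$-arcs} remain mutually crossing after a removal; it says nothing about the other nested stacks. Concretely, take $S=S_0\,\dot\cup\,\{\gamma_1,\gamma_2\}$ where $\gamma_1,\gamma_2$ are two stacks nested inside $S_0$ that cross each other and nothing else. Both are $\prec$-minimal non-$S_0$ stacks, and removing either one leaves the other noncrossing, so no ``safe removable stack'' exists and your induction cannot take even its first backward step; the appeal to a ``descending chain of crossing dependencies'' does not produce one. (The same configuration obstructs any one-stack-at-a-time insertion path all of whose intermediate stages are skeleta, so the paper's forward construction is exposed to it too --- but the burden of \emph{your} argument is precisely this lemma, and it is false as stated.) A second, smaller gap is the bookkeeping index: you argue only ``intuitively'' that the induction hypothesis can be arranged to deliver $r\le i$ for the stack peeled last. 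This needs an explicit invariant relating $r$ to the positions of the not-yet-inserted stacks; in the paper's forward construction that role is played by processing $\mathscr{A}_S^{\mathrm{ne}}$ in left-to-right order of leftmost paired base.
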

%%%
%%%%%%%%%%%%%%%%%%%%%%%%%%%%%%%%%%%%%%%%%%%%%%%%%%%%%%%%%%%%%%%%%%%%%%
%%%
\begin{proof}
Let $\mathscr{A}_S$ denote the set of $S$-arcs.
Obviously, for any vertex $(S,r)\in \mathbb{T}(S_0)$, $S$ is
a $3$-noncrossing skeleton such that 
$\mathscr{A}_{S_0}\subseteq \mathscr{A}_S$, whence
$\mathbb{T}(S_0)\subseteq \mathbb{S}(S_0)$ holds.
For an arbitrary $3$-noncrossing skeleton $S$, let $
\mathscr{A}_S^{\text{\rm ne}}$
denote the set of all nested stacks in $S$. Since each arc is either maximal
or nested we have 
$\mathscr{A}_S=\mathscr{A}_{S_0}\dot\cup \mathscr{A}_S^{\text{\rm ne}}$.
Sorting $\mathscr{A}_S^{\text{\rm ne}}$ via
the linear ordering of their leftmost paired base, we obtain
the sequence $\Sigma=(\alpha_1, \alpha_2,\cdots, \alpha_n)$.
We choose the first element $\alpha_k\in \Sigma$ which is
intersecting $S_0$ (not necessarily $\alpha_1$). Then we have
\begin{equation}
(S_0,r_0)\hookrightarrow_{\alpha_k} (S_1,r_1)
\end{equation}
where, $S_1\in \mathbb{T}(S_0)$. We proceed inductively, setting 
$\mathscr{A}_S^{\text{\rm ne}}=
\mathscr{A}_S^{\text{\rm ne}}\setminus\alpha_k$ and
proceed inductively until $\mathscr{A}_S^{\text{\rm ne}}=\varnothing$.
By construction, each $S_k$ is in $\mathbb{T}(S_0)$, and $S_n=S$.
Accordingly, we constructed an insertion-path in $\mathbb{T}(S_0)$
from $S_0$ to $S$, from which $\mathbb{S}(S_0)
\subset \mathbb{T}(S_0)$ follows.
\end{proof}

%%%
%%%%%%%%%%%%%%%%%%%%%%%%%%%%%%%%%%%%%%%%%%%%%%%%%%%%%%%%%%%%%%%%%%%%%%%%
%%%
\begin{figure}[ht]
\centerline{\epsfig{file=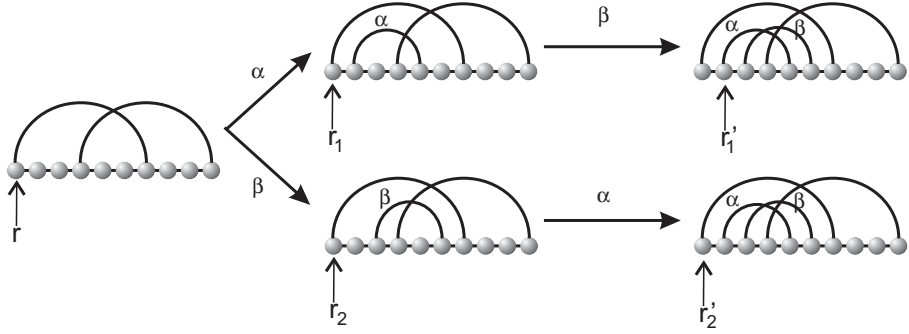,width=0.8\textwidth}\hskip8pt}
\caption{\small Illustration of assertion (b) of Proposition~\ref{P:tree}:
the case $k>3$. While $\mathbb{T}_{(S_0,r_0)}$ is still
a tree (over pairs), the implication $(S_1,r_1)\neq (S_2,r_2)\,\Rightarrow 
S_1\neq S_2$ does not hold in general.}
\label{F:ske-4non}
\end{figure}
%%%
%%%%%%%%%%%%%%%%%%%%%%%%%%%%%%%%%%%%%%%%%%%%%%%%%%%%%%%%%%%%%%%%%%%%%%%%
%%%

%%%
%%%%%%%%%%%%%%%%%%%%%%%%%%%%%%%%%%%%%%%%%%%%%%%%%%%%%%%%%%%%%%%%%%
%%%

\section{Phase III: Saturation}\label{S:sat}

%%%
%%%%%%%%%%%%%%%%%%%%%%%%%%%%%%%%%%%%%%%%%%%%%%%%%%%%%%%%%%%%%%%%%%%
%%%
In this section we discuss the third phase of {\sf cross}. 
The skeleta-trees constructed in the second phase organized the 
non-inductive substructures of an irreducible shadow derived in phase 
one. The objective of the saturation phase is to inductively ``fill'' 
the remaining intervals of a given skeleton with specific substructures. 
Basically, all routines employed here follow the DP-paradigm. 
However, we store a vector of structures rather than energies and 
implement context sensitive DP-routines.

Suppose we are given a skeleta-tree $\mathbb{T}(S_0)$ with root $S_0$.
Let the order of $S$, $\omega(S)$, denote the number of $\prec$-maximal 
$S$-arcs, see Fig.\ref{F:order}. 
Furthermore, let $\Sigma_{i,j}$ and $\Sigma^{[r]}_{i,j}$ be some 
subset of structures over $\{i,i+1,\dots,j-1,j\}$ and those of order 
$r$, respectively.
%%%%
%%%%%%%%%%%%%%%%%%%%%%%%%%%%%%%%%%%%%%%%%%%%%%%%%%%%%%%%%%%%%%%%%%%%%%
%%%
\begin{figure}[ht]
\centerline{\epsfig{file=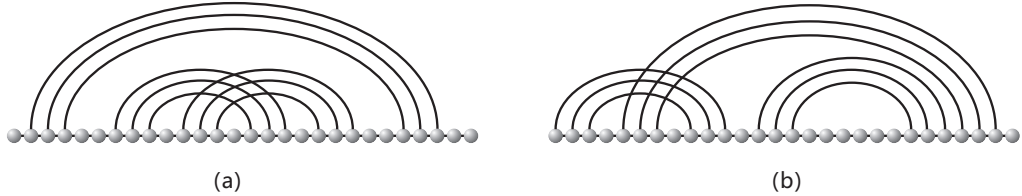,width=0.9\textwidth}\hskip8pt}
\caption{\small Order: In (a) we display a structure of order one. 
(b) showcases a structure of order two.}
\label{F:order}
\end{figure}
%%%%
%%%%%%%%%%%%%%%%%%%%%%%%%%%%%%%%%%%%%%%%%%%%%%%%%%%%%%%%%%%%%%%%%%%%%%
%%%
Let $\mathbb{M}_{i,j}$ denote the set of saturated skeleta over 
$\{i,i+1,\dots,j-1,j\}$ and $OSM(i,j)\in \mathbb{M}_{i,j}$
be a mfe-saturated skeleton. Furthermore, let $OS(i,j)$ be a mfe-structure, 
which is a union of disjoint $OSM(i_1,j_1), \dots OSM(i_r,j_r)$ 
and unpaired nucleotides. By $OSM^{[x]}(i,j)$ and $OS^{[x]}(i,j)$ we denote 
the respective $OSM$ and $OS$ structures of order $x$.
%%%%
%%%%%%%%%%%%%%%%%%%%%%%%%%%%%%%%%%%%%%%%%%%%%%%%%%%%%%%%%%%%%%%%%%%%%%
%%%
\begin{figure}[ht]
\centerline{\epsfig{file=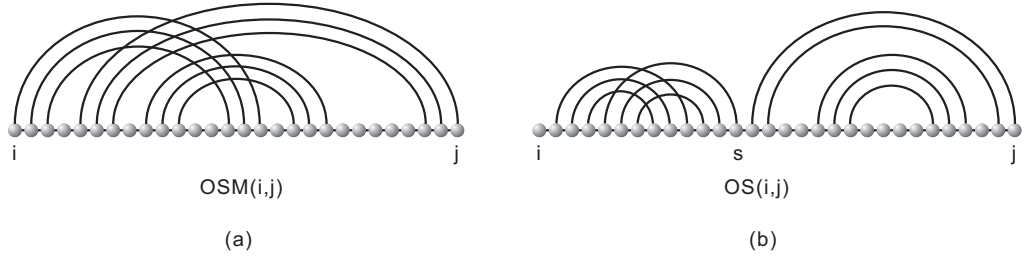,width=0.9\textwidth}\hskip8pt}
\caption{\small $OS$ vers.~$OSM$: we display a $OSM(i,j)$ (a), and
a $OS(i,j)$ structure (b). The $OS(i,j)$ structure shown in (b)
is evidently an union of of the structures $OSM(i,s)$ and $OSM(s+1,j)$ and
the unpaired nucleotide at position $i$. }
\label{F:ExOSOSM}
\end{figure}
%%%%
%%%%%%%%%%%%%%%%%%%%%%%%%%%%%%%%%%%%%%%%%%%%%%%%%%%%%%%%%%%%%%%%%%%%%%
%%%
In order to describe the context-sensitive saturation procedure in 
{\sf cross} we denote by $OS_{\text{\rm mul}}(i,j)$, $OS_{\text{\rm pk}}
(i,j)$ and $OS_0(i,j)$, the mfe-structures nested in a multi-loop,
pseudoknot and otherwise, respectively.

For a given a skeleton $S_{i,j}$, we specify the mapping
$S_{i,j}\mapsto OSM(S_{i,j})$ as follows: suppose $S_{i,j}$ has
$n_1$ intervals, $I_1,\dots,I_{n_1}$ labelled from left to right.
For given interval $I_r=[i_r,j_r]$ and $s_r\in\Sigma_{i_r,j_r}$ we
consider the insertion of $s_r$ into $I_r$, distinguishing the
following
four cases:\\
{\sf Case(1)}. $I_r$ is contained in a hairpin-loop.\\
\underline{$\omega(s_r)=0$}. That is we have $s_r=\varnothing$. The loop
generated by the $s_r$-insertion remains obviously a hairpin-loop, i.e.
$
((i_r-1,j_r+1),[i_r,j_r]),
$
with energy $H(i_r-1,j_r+1)$. \\
\underline{$\omega(s_r)=1$}.
Let $(p,q)$ be the unique, maximal $s_r$-arc. Then $s_r$-insertion produces
the interior-loop
$$
((i_r-1,j_r+1),[i_r,p-1],(p,q),[q+1,j_r]),
$$
with energy $I(i_k-1,j_k+1,p,q)$. Note that $p=i_r$ implies $q\neq j_r$ and
$s_r\in OSM_0^{[1]}(p,q)$. \\
\underline{$\omega(s_r)\ge 2$}.
In this case inserting $s_r$ into $I_r$ creates a multi-loop in which
$s_r$ is nested. Then $s_k\in OS^{[\ge 2]}_{\text{\rm mul}}$,
see Fig.\ref{F:OSM_1}.
Let $\epsilon(s)$ denote the energy of structure $s$. We select the set 
of all structures $s_r$ such that
$$
\epsilon(s_r)=\text{\rm min}
\begin{cases}
H(i_r-1,j_r+1)  \\
I(i_r-1,j_r+1,p,q) + \epsilon(OSM_0^{[1]}(p,q)) \\ 
\quad \quad  \forall i_r\le p<q\le j_r \ \text{\rm and } 
\ p=i_r, \Rightarrow q \neq
j_k \\
M+P_1+\epsilon(OS^{[\ge 2]}_{\text{\rm mul}}(i_r,j_r)).
\end{cases}
$$
Here, $M$ is the energy penalty for forming a multi-loop and
$P_1$ is the energy score of a closing-pair in multi-loop.
%%%%%%
%%%%%%%%%%%%%%%%%%%%%%%%%%%%%%%%%%%%%%%%%%%%%%%%%%%%%%%%%%%%%%%%%
%%%%%%%
\begin{figure}[ht]
\centerline{\epsfig{file=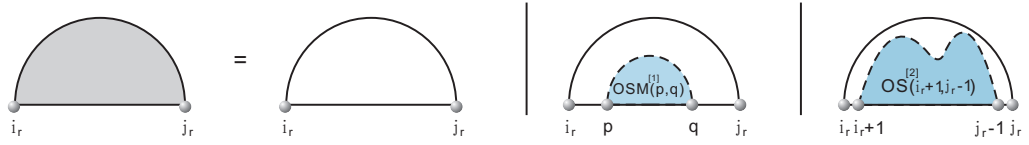,width=0.9\textwidth}\hskip8pt}
\caption{\small Saturation in hairpin-loops: the interval on the left 
hand side is filled with substructures $s_r$ such that $\omega(s_r)=0$ 
(left), $\omega(s_r)=1$ (middle) or $\omega(s_r)\ge 2$ (right).} 
\label{F:OSM_1}
\end{figure}
%%%%%%%%
%%%%%%%%%%%%%%%%%%%%%%%%%%%%%%%%%%%%%%%%%%%%%%%%%%%%%%%%%%%%%%%%%%%%
%%%%%%%%

%%%%%%%%%%%%%%%%%%%%%%%%%%%%%%%%%%%%%%%%%%%%%%%%%%%%%%%%%%%%%%%%%%%%
{\sf Case(2)}. $I_r$ is contained in a pseudoknot loop. \\
%%%%%%%%%%%%%%%%%%%%%%%%%%%%%%%%%%%%%%%%%%%%%%%%%%%%%%%%%%%%%%%%%%%%
\underline{$\omega(s_r)=0$}. That is we have $s_r=\{\varnothing \}$ and
the unpaired bases in $I_r$ are considered to be contained in a
pseudoknot. \\
\underline{$\omega(s_r)\ge 1$}. In this case, $s_r$ is a substructure
which is nested in a pseudoknot, see Fig.\ref{F:OSM_2}. As a result
our selection criterion is given by \\
$$
\epsilon(s_r)= \text{\rm min}
\begin{cases}
(j_r-i_r+1) \cdot Q_{\text{\rm pk}} \\
\epsilon(OS_{\text{\rm pk}}(i_r,j_r)).
\end{cases}
$$
where $(j_r-i_r+1)\in\mathbb{N}$ is the number of unpaired bases in
$I_r$, and $Q_{\text{\rm pk}}$ is the energy score of the unpaired bases in a
pseudoknot.
%%%%%
%%%%%%%%%%%%%%%%%%%%%%%%%%%%%%%%%%%%%%%%%%%%%%%%%%%%%%%%%%%%%%%%%%%
%%%%%
\begin{figure}[ht]
\centerline{\epsfig{file=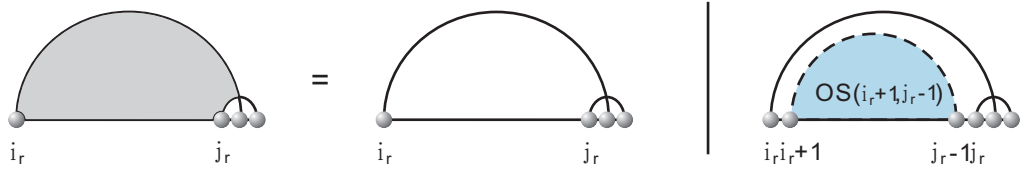,width=0.9\textwidth}\hskip8pt}
\caption{\small Saturation of interval nested in a pseudoknot.}
\label{F:OSM_2}
\end{figure}
%%%%%%
%%%%%%%%%%%%%%%%%%%%%%%%%%%%%%%%%%%%%%%%%%%%%%%%%%%%%%%%%%%%%%%%%%%
%%%%%%

{\sf Case(3)}. $I_r$ is contained in a multi-loop. In analogy to
case (2), we distinguish the following cases:\\
\underline{$\omega(s_r)=0$}. That is we have $s_r=\{\varnothing\}$.
The unpaired bases in $I_r$ are considered to be contained in a multi-loop. \\
\underline{$\omega(s_r)\ge 1$}. In this case, $s_r$ is a
substructure nested in a multi-loop, see Fig.\ref{F:OSM_3}. Accordingly, we
select all structures satisfying\\
\begin{equation*}
\epsilon(s_r) = \text{\rm min}
\begin{cases}
(j_r-i_r+1) \cdot Q_{\text{\rm mul}} \\
\epsilon(OS_{\text{\rm mul}}(i_r,j_r)),
\end{cases}
\end{equation*}
where $Q_{\text{\rm mul}}$ denotes the energy score of the unpaired bases in a
multi-loop.
%%%%%
%%%%%%%%%%%%%%%%%%%%%%%%%%%%%%%%%%%%%%%%%%%%%%%%%%%%%%%%%%%%%%%%%%%
%%%%%
\begin{figure}[ht]
\centerline{\epsfig{file=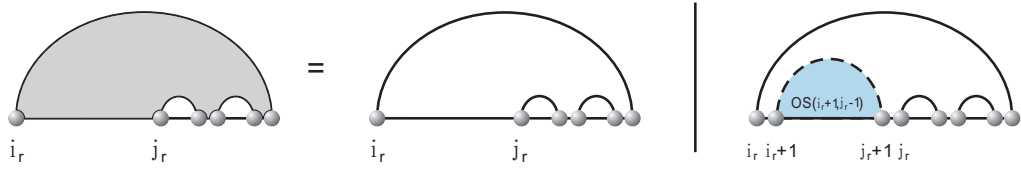,width=0.9\textwidth}\hskip8pt}
\caption{\small Saturation of an interval contained in a multi-loop. }
\label{F:OSM_3}
\end{figure}
%%%%%%
%%%%%%%%%%%%%%%%%%%%%%%%%%%%%%%%%%%%%%%%%%%%%%%%%%%%%%%%%%%%%%%%%%%
%%%%%%

{\sf Case(4)} $I_r$ is contained in an interior-loop. By
construction, the latter is formed by the pair $(I_r,I_l)$, where
$r<l$. We then select pairs $s_r$ in $\Sigma_{i_r,j_r}$ and $s_l$ in
$\Sigma_{i_l,j_l}$. Note that only the first coordinate of the pair
$(I_r,I_l)$ is considered.\\
\underline{$\omega(s_r)=0$ and $\omega(s_l)=0$}. Obviously, in this case 
the loop formed by $I_r$ and $I_l$ remains an interior-loop
$$
((i_r-1,j_l+1),[i_r,j_r],(j_r+1,i_l-1),[i_l,j_l]),
$$
whose energy is given by $I(i_r-1,j_l+1,j_r+1,i_l-1)$.\\
\underline{$\omega(s_r)\ge 1$ and $\omega(s_l)=0$}. In this case,
$s_l=\{\varnothing\}$. $I_r$ and $I_l$ create a multi-loop, in which
$s_r$ and the substructure $G_{j_r+1,i_l-1}$ are nested.\\
\underline{$\omega(s_r)=0$ and $\omega(s_l)\ge 1$}. Completely analogous 
to the previous case.\\
\underline{$\omega(s_r)\ge 1$ and $\omega(s_l)\ge 1$}. In this case,
$I_r$ and $I_l$ create a multi-loop, in which $s_r$, $s_l$ and
$G_{j_r+1,i_l-1}$ are nested, see Fig.\ref{F:OSM_4}.\\
Accordingly, we select all pairs of structures $(s_r,s_l)$ satisfying \\
$$
\epsilon(s_r)+\epsilon(s_l)= \text{\rm min}
\begin{cases}
I(i_r-1,j_l+1,j_r+1,i_l-1) \\
M+2P_1+\epsilon(OS_{\text{\rm mul}}(i_r,j_r))+(j_l-i_l+1)\cdot 
Q_{\text{\rm mul}} \\
M+2P_1+\epsilon(OS_{\text{\rm mul}}(i_l,j_l))+(j_k-i_k+1)\cdot 
Q_{\text{\rm mul}} \\
M+2P_1+\epsilon(OS_{\text{\rm mul}}(i_r,j_r))+
\epsilon(OS_{\text{\rm mul}}(i_l,j_l)) \\
\end{cases}
$$

%%%%%%
%%%%%%%%%%%%%%%%%%%%%%%%%%%%%%%%%%%%%%%%%%%%%%%%%%%%%%%%%%%%%%%%%%%
%%%%%%
\begin{figure}[ht]
\centerline{\epsfig{file=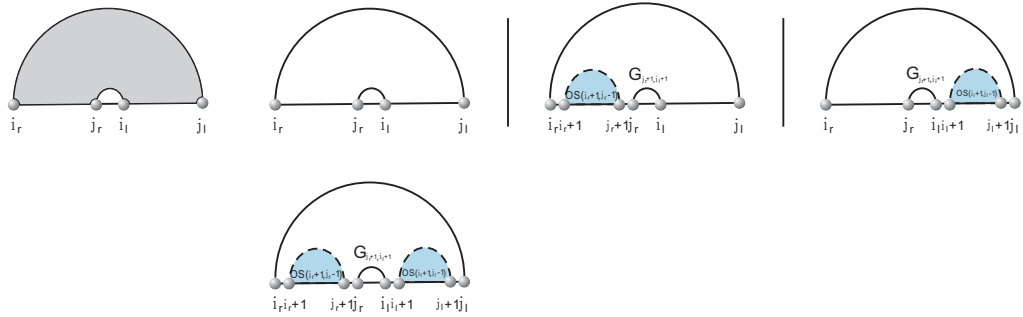,width=0.9\textwidth}\hskip8pt}
\caption{\small Saturation of an interval contained in an interior-loop,
which is obtained by $I_r$ and $I_l$, where $r<l$. } \label{F:OSM_4}
\end{figure}
%%%%
%%%%%%%%%%%%%%%%%%%%%%%%%%%%%%%%%%%%%%%%%%%%%%%%%%%%%%%%%%%%%%%%%%%%%%
%%%

Accordingly, we inductively saturate all intervals and in case of interior 
loops interval-pairs and thereby derive $OSM(S_{i,j})$. 
Then we select an energy-minimal $OSM(i,j)$ substructure
from the set of all $OSM(S_{i,j})$ for any skeleton $S_{i,j}$.

As for the construction of $OS(i,j)$ via $OSM(i',j')$, we consider
position $i$ in $OS(i,j)$. If $i$ is  paired, then $i$ is contained
in some $OSM(i,s)$.  Then $OS(i,j)$ induces a substructure $S_2$ over
$\{s+1,\dots,j\}$. By construction $OS(i,j)= OSM(i,s)\dot\cup S_2$, whence
$S_2=OS(s+1,j)$ and in particular we have
\begin{equation}
\epsilon(OS(i,j))=\epsilon(OSM(i,s))+\epsilon(OS(s+1,j)).
\end{equation}
Suppose next $i$ is
unpaired in $OS(i,j)$. Since $\epsilon$ is a loop-based energy, we
can conclude  $OS(i,j)=\{\varnothing\} \dot\cup OS(i+1,j)$, i.e.~we have
\begin{equation}
\epsilon(OS(i,j))=\epsilon(OS(i+1,j)) + Q
\end{equation}
where $Q$ represents the energy contribution of a single, unpaired
nucleotide.
Accordingly, we can inductively construct $OS(i,j)$ via the criterion
$$
\epsilon(OS(i,j))=\text{\rm min}\{\epsilon(OS(i+1,j))+Q,
\epsilon(OSM(i,s))+\epsilon(OS(s+1,j))\},\quad \forall i<s\leq j.
$$
%%%%
%%%%%%%%%%%%%%%%%%%%%%%%%%%%%%%%%%%%%%%%%%%%%%%%%%%%%%%%%%%%%%%%%%%%%%
%%%
\begin{figure}[ht]
\centerline{\epsfig{file=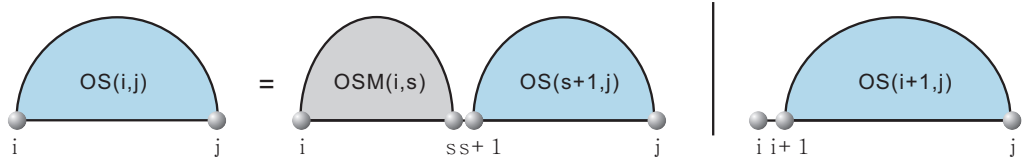,width=0.9\textwidth}\hskip8pt}
\caption{\small Constructing $OS(i,j)$: inductive decomposition of
the optimal structure, $OS(i,j)$, into saturated skeleta, $OSM(i,s)$
and unpaired nucleotides.} \label{F:OS}
\end{figure}
%%%%
%%%%%%%%%%%%%%%%%%%%%%%%%%%%%%%%%%%%%%%%%%%%%%%%%%%%%%%%%%%%%%%%%%%%%%
%%%

Now we can inductively construct the array of structures 
$OS(i,j)$ and $OSM(i,j)$ via $OS$ and $OSM$ structures over smaller 
intervals. As a result, we finally obtain the structure $OS(1,n)$, 
i.e.~the mfe-structure, see Fig.\ref{F:OSmatrix}.

%%%
%%%%%%%%%%%%%%%%%%%%%%%%%%%%%%%%%%%%%%%%%%%%%%%%%%%%%%%%%%%%%%%%%%%%%%%%%%%%%%
%%%
\begin{figure}[ht]
\centerline{\epsfig{file=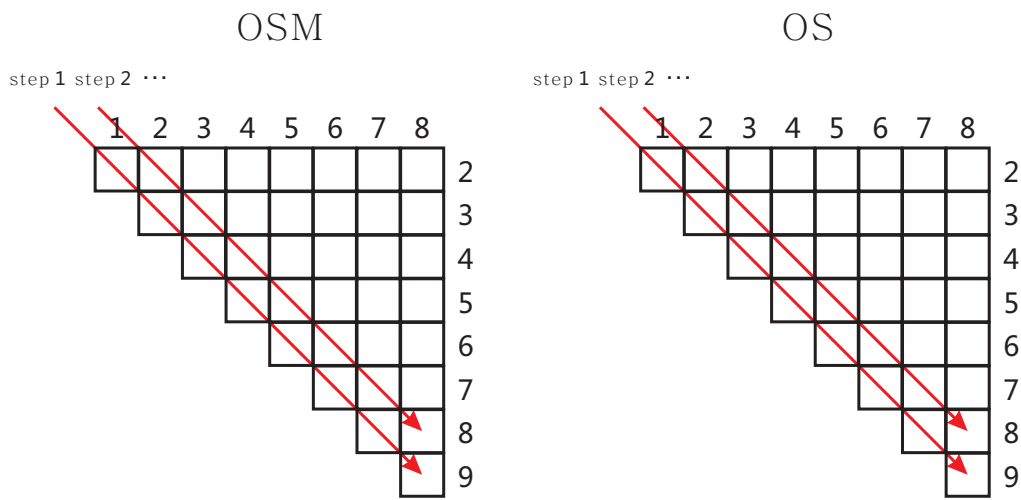,width=0.9\textwidth}\hskip8pt}
\caption{\small Inductive construction of $OS$ and $OSM$ structures: 
in the $s$-th step, 
we first construct $OSM(i,i+s)$, for any $0<i<n-s+1$. We then construct 
$OS(i,i+s)$ recruiting $OSM$-structures over intervals of lengths strictly
smaller than $s$.}
\label{F:OSmatrix}
\end{figure}
%%%
%%%%%%%%%%%%%%%%%%%%%%%%%%%%%%%%%%%%%%%%%%%%%%%%%%%%%%%%%%%%%%%%%%%%%%%%%%%%%%
%%%
%%%
%%%%%%%%%%%%%%%%%%%%%%%%%%%%%%%%%%%%%%%%%%%%%%%%%%%%%%%%%%%%%%%%%%%%%%%%%%%%%%
%%%

\section{Synopsis}

%%%
%%%%%%%%%%%%%%%%%%%%%%%%%%%%%%%%%%%%%%%%%%%%%%%%%%%%%%%%%%%%%%%%%%%%%%%%%%%%%%
%%%

After providing the necessary background and context on pseudoknot folding 
routines and $k$-noncrossing structures, we discussed in detail in Sections 
\ref{S:motifs},\ref{S:skeleta} and \ref{S:sat} the three phases of 
{\sf cross}, see Fig.\ref{F:sketch}. Now, that the key ideas are presented, 
we proceed by integrating and discussing our results.
%%%
%%%%%%%%%%%%%%%%%%%%%%%%%%%%%%%%%%%%%%%%%%%%%%%%%%%%%%%%%%%%%%%%%%%%%%%%%%%%%%
%%%
\begin{figure}[ht]
\centerline{\epsfig{file=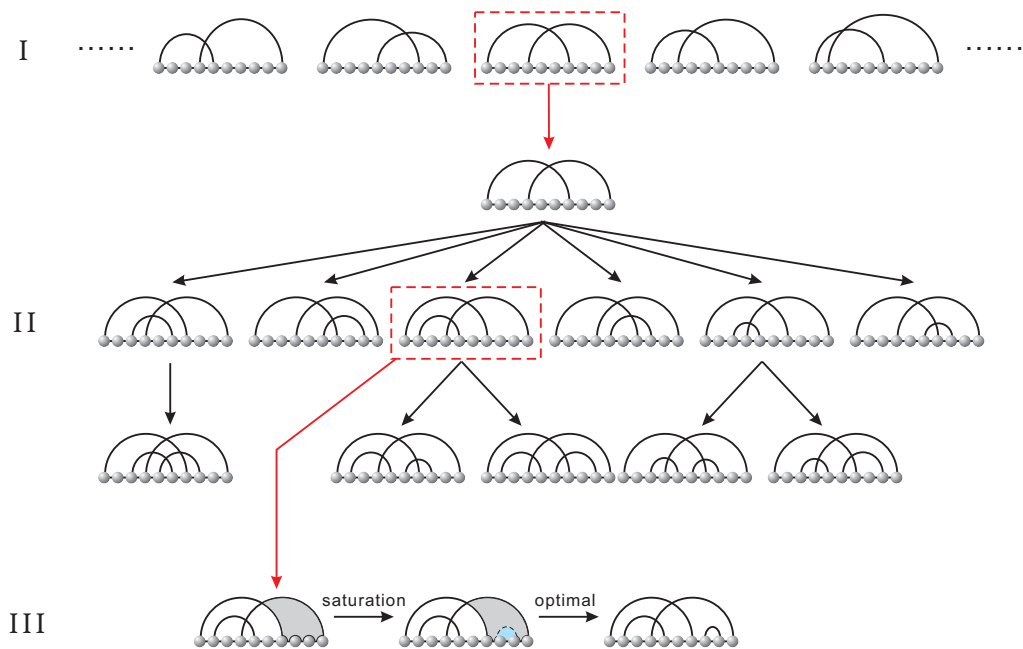,width=0.9\textwidth}\hskip8pt}
\caption{\small An outline of {\sf cross}: the generation of motifs (I), the 
construction of skeleta-trees, rooted in irreducible shadows (II) and 
the saturation (III), during which, via DP-routines, optimal fillings of 
skeleta-intervals are derived.
}
\label{F:sketch}
\end{figure}
%%%
%%%%%%%%%%%%%%%%%%%%%%%%%%%%%%%%%%%%%%%%%%%%%%%%%%%%%%%%%%%%%%%%%%%%%%%%%%%%%%
%%%
{\sf Cross} is an {\it ab initio} folding algorithms, 
which is guaranteed to 
search all $3$-noncrossing, $\sigma$-canonical structures and derives the 
corresponding loop-based mfe-configuration. 
A detailed description of the loop-energies as well as specific implementation 
particulars on how to generate the skeleta-trees of Section~\ref{S:skeleta} 
via a certain matrix construction can be found at
$$
{\tt www.combinatorics.cn/cbpc/cross.html}
$$
We remark that the code is improved and new features are being added, 
for instance, we currently work towards deriving the partition function 
version of {\sf cross}, the generalization for arbitrary $k$ and a 
fully parallel implementation.
The design of {\sf cross} is fundamentally different from that of the 
pseudoknot DP-routines found in the literature. Point in case being the 
algorithm of \cite{RE:98}, as outlined in Section~\ref{S:Introduction}. 
We showed that the latter cannot create any nonplanar $3$-noncrossing 
structure and furthermore cannot control the maximal number of mutually 
crossing arcs (crossing number). Consequently, DP-routines generate 
pseudoknot complexity by ``just'' increasing this very crossing number. 
The class of nonplanar $3$-noncrossing structures illustrates however, that 
structural complexity is not tantamount to the crossing number. 

One key difference to any other pseudoknot 
folding algorithm is the fact that {\sf cross} has a transparent, 
combinatorially specified, output class. This feature exists exclusively 
in secondary structure folding algorithms, 
where it is {\it by construction} implied. 
This specification is based on a novel
combinatorial class, the $k$-noncrossing RNA structures and their exact and
asymptotic enumeration \cite{Reidys:07pseu,07lego,Reidys:08ma}. 
The concept of $k$-noncrossing RNA structures is based on the
combinatorial work of Chen~{\it et al.} \cite{Chen, Reidys:08cross}. 
The implications of this framework are profound: for $k=3,4,\dots,6$ it 
is possible, employing central limit theorems for $k$-noncrossing structures 
\cite{Reidys:07limit,Reidys:08limit} to derive a variety of 
generic properties of sequence-structure maps into RNA pseudoknot structures,
irrespective of energy parameters \cite{Reidys:08loc,Reidys:bmc}.

Furthermore {\sf cross} is capable to generate novel classes of 
pseudoknots. Even in its current implementation, i.e.~restriced to
$3$-noncrossing structures it can generate any non-planar configuration.
As mentioned already, the extension of {\sf cross} to a version capable 
of folding any $k$-noncrossing structure, is work in progress. In this
context, assertion (b) of Proposition~\ref{P:tree} shows that novel 
constructions are required for efficient folding.
{\sf Cross} is {\it by design} an algorithm of exponential time complexity by 
virtue of its construction of its shadows and skeleta-trees. Only in its 
saturation phase it employs vector versions of DP-routines. Beyond the
asymptotic analysis of motifs, given in Section~\ref{S:motifs}, a detailed 
study of the performance of {\sf cross} is work in progress. It appears 
however, that the folding times of random sequences are exponentially 
distributed. In Fig.\ref{F:time} 
%%%
%%%%%%%%%%%%%%%%%%%%%%%%%%%%%%%%%%%%%%%%%%%%%%%%%%%%%%%%%%%%%%%%%%%%%%%%%%%%%%
%%%
\begin{figure}[ht]
\centerline{\epsfig{file=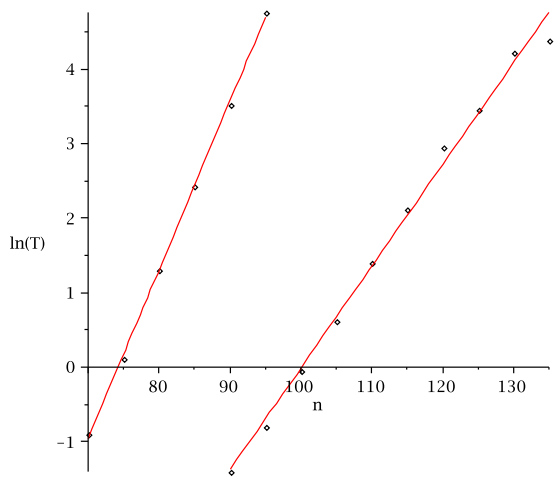,width=0.5\textwidth}\hskip8pt}
\caption{\small Mean folding times: we display the logarithm of the
folding times of $1000$ random sequences as a function of the sequence 
length. For $3$-canonical and $4$-canonical structures the linear fits are
given by $0.2263n-19.796$ (left) and $0.1364n-13.659$ (right),
respectively.
}
\label{F:time}
\end{figure}
we display the logarithm of the mean folding time of $1000$ random sequences. 
These data suggest exponential times with the exponential growth rates of 
$\approx 1.146$ and $\approx 1.254$, for $3$-canonical and $4$-canonical 
structures, respectively. 
In particular, a random sequence of length $100$ folded via a single core, 
$2.2$-GHz CPU exhibits a mean folding time of $279$ seconds with standard
deviation of $267744$ seconds.

%%%
%%%%%%%%%%%%%%%%%%%%%%%%%%%%%%%%%%%%%%%%%%%%%%%%%%%%%%%%%%%%%%%%%%%%%%%%
%%%
{\bf Acknowledgments.}
%%%
%%%%%%%%%%%%%%%%%%%%%%%%%%%%%%%%%%%%%%%%%%%%%%%%%%%%%%%%%%%%%%%%%%%%%%%%%%
%%%
%We are grateful to ??? for helpful discussions.
This work
was supported by the 973 Project, the PCSIRT Project of the Ministry
of Education, the Ministry of Science and Technology, and the
National Science Foundation of China.
%%%
%%%%%%%%%%%%%%%%%%%%%%%%%%%%%%%%%%%%%%%%%%%%%%%%%%%%%%%%%%%%%%%%%%%%%%%%
%%%
{
 \bibliography{algo_final_article.bib} 
 \bibliographystyle{plain}  % Style BST file
} 

\end{document}